\documentclass{amsart} 

\usepackage{amsmath,amssymb,amsthm,amsfonts,amsxtra,mathtools}
\usepackage{enumerate,verbatim,mathrsfs,comment,color,url}
\usepackage[all,2cell,ps]{xy}
\usepackage[pagebackref]{hyperref}

\usepackage{graphicx}
\usepackage{tikz-cd}
\usepackage{verbatim}
\usepackage{stackengine}
\usepackage{cleveref}
\usepackage{array}
\usepackage{booktabs}
\usepackage[dvipsnames]{xcolor}

\DeclareMathOperator{\Ass}{Ass}

\DeclareMathOperator{\depth}{depth}
\DeclareMathOperator{\eend}{end}

\DeclareMathOperator{\Ext}{Ext}

\DeclareMathOperator{\Hom}{Hom}

\DeclareMathOperator{\indeg}{indeg}

\DeclareMathOperator{\reg}{reg}

\DeclareMathOperator{\Tor}{Tor}

\renewcommand{\ge}{\geqslant}
\renewcommand{\le}{\leqslant}

\newcommand{\bn}{\mathbb{N}}
\newcommand{\bz}{\mathbb{Z}}

\newcommand{\lra}{\longrightarrow}
\newcommand{\xlra}{\xlongrightarrow}

\newcommand{\mscrm}{\mathscr{M}}

\newcommand{\uc}{\underline{c}}
\newcommand{\um}{\underline{m}}
\newcommand{\un}{\underline{n}}
\newcommand{\ur}{\underline{r}}

\theoremstyle{plain}
\newtheorem{theorem}{Theorem}[section]
\newtheorem{lemma}[theorem]{Lemma}
\newtheorem{proposition}[theorem]{Proposition}
\newtheorem{corollary}[theorem]{Corollary}

\theoremstyle{definition}
\newtheorem{definition}[theorem]{Definition}

\newtheorem{example}[theorem]{Example}

\newtheorem{notation}[theorem]{Notation}

\newtheorem{para}[theorem]{}
\newtheorem{question}[theorem]{Question}
\newtheorem{setup}[theorem]{Setup}

\theoremstyle{remark}
\newtheorem{remark}[theorem]{Remark}

\numberwithin{equation}{theorem}

\title[Asymptotic regularity of (co)homology modules]{Asymptotic Castelnuovo-Mumford regularity of (co)homology modules involving powers of ideals}
 
\author[D.~Ghosh]{Dipankar Ghosh}
\address{(Ghosh) Department of Mathematics, Indian Institute of Technology Kharagpur, West Bengal - 721302, India}
\email{dipankar@maths.iitkgp.ac.in, dipug23@gmail.com}
\urladdr{\url{https://orcid.org/0000-0002-3773-4003}}

\author[R.~Nanduri]{Ramakrishna Nanduri}
\address{(Nanduri) Department of Mathematics, Indian Institute of Technology Kharagpur, West Bengal - 721302, India}
\email{nanduri@maths.iitkgp.ac.in}
\urladdr{\url{https://orcid.org/0000-0002-5136-8214}}

\author[S.~Pramanik]{Siddhartha Pramanik}
\address{(Pramanik) Department of Mathematics, Indian Institute of Technology Kharagpur, West Bengal - 721302, India}
\email{siddharthap@kgpian.iitkgp.ac.in, pramaniksiddhartha2@gmail.com}
\urladdr{\url{https://orcid.org/0009-0005-1219-4265}}

\date{October 12, 2026}
\subjclass[2020]{Primary 13A02, 13A15, 13D07, 13D45; Secondary 13C12}
\keywords{Graded rings and modules; Powers and products of ideals; Castelnuovo-Mumford regularity; Ext; Tor; Asymptotic behaviour}

\begin{document}
    \pagenumbering{arabic}
    \thispagestyle{empty}

\dedicatory{Dedicated to Professor Winfried Bruns and Professor J\"urgen Herzog in recognition of their profound contributions to commutative algebra.}

\begin{abstract}
%
Let $R$ be a Noetherian standard $\mathbb{N}$-graded ring, and let $I_1,\ldots,I_r$ be homogeneous ideals of $R$. Let $L,M$, and $N$ be finitely generated $\mathbb{Z}$-graded $R$-modules with $N\subseteq M$. For $\underline{n} :=(n_1,\dots,n_r)\in\mathbb N^r$, set ${\bf I}^{\underline{n}}:=I_1^{n_1}\cdots I_r^{n_r}$. We study the asymptotic behaviour of the (Castelnuovo-Mumford) regularity of two families of Ext and Tor modules: those obtained from the quotients ${\bf I}^{\underline{n}}M/{\bf I}^{\underline{n}}N$, and those obtained from the quotients $M/{\bf I}^{\underline{n}}N$, with $L$ as the first argument in both the families. For each fixed homological degree, we prove that the regularity of the former family is eventually given by the supremum of finitely many linear functions of $\underline{n}$, where the coefficient of $n_i$ belongs to the set of degrees of generators of $I_i$. For the second family, when at least one of the ideals $I_i$ is generated by homogeneous elements of positive degree, we establish, under suitable conditions, a dichotomy: the regularity is either eventually equal to the regularity of the Ext or Tor module of $(L,M)$, or exhibits the same asymptotic behaviour as in the first family. These results follow from a theorem describing the asymptotic behaviour of the regularity of $(U+{\bf I}^{\underline{n}}V)/{\bf I}^{\underline{n}}W$, under suitable conditions, where $U$, $V$, and $W$ are graded submodules of a finitely generated graded $R$-module with $W\subseteq V$.
\end{abstract}

\maketitle 

\section{Introduction}

The (Castelnuovo-Mumford) regularity (Definition~\ref{def:reg}) is a fundamental invariant in Commutative Algebra and Algebraic Geometry, measuring the complexity of a graded module through the vanishing of the graded components of its local cohomology modules. Let $R$ be a standard graded polynomial ring over a field, and $I$ a homogeneous ideal of $R$. Cutkosky-Herzog-Trung \cite[Thm.~1.1.(ii)]{CHT99} and independently Kodiyalam \cite[Thm.~5]{Ko00} proved that $\reg(I^n)$ is eventually a linear function of $n$, i.e., there exist constants $a$ and $b$ such that $\reg(I^n) = an+b$ for all $n\gg 0$. This result was extended by Trung-Wang \cite[Thm.~3.2]{TW05}, who showed that, for a finitely generated graded $R$-module $M$, $\reg(I^nM)$ is asymptotically linear in $n$, where $R$ is a Noetherian standard $\bn$-graded ring. For powers and products of several ideals, Ghosh obtained a linear upper bound of this function, see \cite[Cor.~4.4]{Gh16}. Subsequently, Bruns-Conca \cite[Thm.~2.2]{BC17} proved that $\reg(I_1^{n_1}\cdots I_r^{n_r})$ is eventually given by the maximum of finitely many linear functions of $n_1, \dots, n_r$, where $I_1, \dots, I_r$ are homogeneous ideals of $R$. We refer the reader to \cite[Thm.~5.4]{GP19} and \cite[Thm.~A]{CGN22} for further variations and related results. In the present article, we prove Proposition~\ref{prop:regLM}, which unifies and highly extends the results of Trung-Wang, Ghosh, and Bruns-Conca. More precisely, for a finitely generated graded $R$-module $L$, under the assumption that at least one of the ideals $I_i$ is generated in positive degrees, we show that the regularity of $L+I_1^{n_1}\cdots I_r^{n_r}M$ is eventually either equal to $\reg(L)$ or given by the supremum of finitely many linear functions of $(n_1,\ldots,n_r)$, with the coefficient of $n_i$ belonging to the set of degrees of generators of $I_i$.

The asymptotic behaviour of other algebraic invariants associated with Ext and Tor modules has also been extensively studied. Let $k \ge 0$ be a fixed integer. In \cite[Thm.~1]{MS93}, Melkersson-Schenzel proved the asymptotic stability of the set $\Ass_R(\Tor_k^R(L, R/I^n))$ of associated primes, where $R$ is a Noetherian ring, $I$ is an ideal of $R$, and $L$ is a finitely generated $R$-module. More generally, in \cite[Cor.~3.5]{KW04}, Katz-West showed that the sets $\Ass_R(\Ext^k_R(L,M/I^nN))$ and $\Ass_R(\Tor_k^R(L,M/I^nN))$ become stable for all $n \gg 0$, where $N \subseteq M$ are finitely generated $R$-modules. See \cite{GP19b} for a concise proof of this result. Moreover, for an ideal $J$ of $R$, it follows from a result \cite[Thm.~1.1]{S17} of Se that $\depth_J(\Ext^k_R(L,M/I^nM))$ and $\depth_J(\Tor_k^R(L,M/I^nM))$ stabilize asymptotically. Banda-Melkersson gave a different proof of this result in \cite[Thm.~3.5]{BM2019}. In another direction, Kodiyalam \cite[Thm.~2]{Ko93} proved that the length functions
$$n \mapsto \lambda_R(\Ext^k_R(L, M/I^nM)) \quad \mbox{and} \quad n \mapsto \lambda_R(\Tor_k^R(L, M/I^nM))$$
are eventually given by polynomials, provided $\lambda_R(L \otimes_RM) < \infty$. Theodorescu \cite[Cor.~4]{T02} obtained the same conclusion under a weaker hypothesis. See \cite[Thm.~1.3]{DGPPS25} for a common generalization of the aforementioned results in the case of powers and products of several ideals. In the graded setting, Ghosh-Pramanik \cite[Thm.~1.3]{GP25} proved that the Vasconcelos invariants $v(\Ext^k_R(L, M/I^nN))$ and $v(\Tor_k^R(L, M/I^nN))$ are eventually linear in $n$, under certain conditions. These results demonstrate that several algebraic invariants of the modules $\Ext_R^k(L, M/ I^nN)$ and $\Tor_k^R(L, M/I^n N)$ exhibit good asymptotic behaviour. Motivated by these developments, it is natural to investigate the asymptotic behaviour of the regularity of the modules $\Ext_R^k(L, M/ I^nN)$ and $\Tor_k^R(L, M/I^n N)$ as functions of $n$. 

Let $R$ be a Noetherian standard $\bn$-graded ring over an Artinian local ring, and $I_1, \dots, I_r$ be homogeneous ideals of $R$. Let $L$, $M$ and $N$ be finitely generated graded $R$-modules, with $N \subseteq M$. Then it follows from \cite[Thm.~4.3]{Gh16} that for each fixed $k \ge 0$, the functions
$$ \reg\big(\Ext^k_R(L,M/I_1^{n_1}\cdots I_r^{n_r} N)\big) \; \quad\text{and}\quad \reg\big(\Tor_k^R(L,M/I_1^{n_1}\cdots I_r^{n_r}N)\big)$$
are bounded above by linear functions of $n_1, \dots, n_r$. This naturally leads to the question that what is the exact asymptotic behaviour of each of these functions. We are not aware of a result giving an exact asymptotic description of these regularity functions, even in the case of powers of a single ideal. The goal of this article is to investigate this question. Before stating the main results, we fix the following setup and notation for the remainder of this paper.

\begin{setup}\label{setup1}
Let $R := R_{0}[x_1,\dots,x_d]$ be a Noetherian standard $\bn$-graded ring. Set $R_+ := \langle x_1,\dots,x_d \rangle$. Let $I_1,\dots,I_r$ be homogeneous ideals of $R$. Suppose that $I_i$ is generated by homogeneous elements $f_{i,1},\dots,f_{i,g_i}$ with $\deg(f_{i,j}) = d_{i,j}$ for $1\le i\le r$ and $1\le j\le g_i$. Let $L$, $M$ and $N$ be finitely generated $\bz$-graded $R$-modules such that $N\subseteq M$. Denote ${\bf I}^{\un} := I_1^{n_1} \cdots I_r^{n_r}$, where $\underline{n} =(n_1, \dots, n_r) \in \bn^r$.
\end{setup}

\begin{notation}
Let $r\ge 1$ be an integer. A typical element of $\bz^r$ is denoted by $\un:=(n_1, \dots, n_r)$. Set $\underline{0} := (0,\ldots,0)$. For $1 \le i \le r$, let $e_i$ denotes the $i$th basis vector of $\bn^r$.
For $\um, \un\in\bz^r$, define $\um \ge \un$ if $m_i \ge n_i$ for all $1\le i\le r$. By `for all $\un \gg 0$', we mean `for all $\un \ge \uc$' for some $\uc \in \bn^r$. The \emph{initial degree} of $M$ is defined as $\indeg(M) := \inf \{ n \in \bz : M_n \neq 0\}$, with the convention $\indeg(0) := \infty$.
\end{notation}

In this article, we mainly prove the following theorem. 

\begin{theorem}[See Theorems~\ref{th:mainext} and \ref{th:maintor}]\label{th:main}
With \textnormal{Setup~\ref{setup1}}, let $\indeg(I_i) >0$ for some $i$. Fix $k \ge 0$. Set $G := \Ext_R^k(L,M)$ and $G_{\un} := \Ext_R^k(L, M/{\bf I}^{\un} N)$; or $G := \Tor_k^R(L,M)$ and $G_{\un} := \Tor_k^R(L,M/{\bf I}^{\un}N)$, for $\un \ge \underline{0}$. Assume that at least one of the following two conditions holds:
    \begin{enumerate}[\rm (i)]
    \item $G$ is $R_+$-torsion.
    \item $G_{\un}$ is $R_+$-torsion for all $\un \gg 0$.
    \end{enumerate}
    
Then, either $G_{\un} = 0$ for all $\un \gg 0$, or $G_{\un} \neq 0$ for all $\un \gg 0$. Moreover, in the second case $($i.e., when $G_{\un} \neq 0$ for all $\un \gg 0$$)$, exactly one of the following holds:
\begin{enumerate}[\rm (1)]
    \item $\reg(G_{\un}) = \reg(G)$ for all $\un \gg 0$.
    \item $\reg(G_{\un}) = \sup \{ \mathscr{L}_{s}(\un) : 1 \le s \le m\}$ for all $\un \gg 0$,
    where each $\mathscr{L}_s : \bn^r \to \bz$ is a linear function defined as $$\mathscr{L}_s(\underline{n}) = (w_{s,1}n_1 + w_{s,2}n_2 + \cdots + {w_{s,r}n_r}) + c_{s}$$ 
    for some $w_{s,i}\in \{d_{i,1},\dots,d_{i,g_{i}}\}$ and $c_{s} \in \bz$ for $1 \le s \le m$ and $1\le i\le r$.
\end{enumerate}
\end{theorem}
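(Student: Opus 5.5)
The plan is to reduce both the Ext and the Tor statement to the structural theorem mentioned in the abstract, which describes the asymptotic regularity of modules of the form $(U+{\bf I}^{\un}V)/{\bf I}^{\un}W$ inside a fixed finitely generated graded module.

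Take the Ext case first. Choose a graded free resolution $F_\bullet$ of $L$ by finitely generated free modules. Then $\Ext^k_R(L,-)$ is the $k$-th cohomology of $\Hom_R(F_\bullet,-)$. Because each $F_j$ is finite free, $\Hom_R(F_j, M/{\bf I}^{\un}N)\cong \Hom_R(F_j,M)/{\bf I}^{\un}\Hom_R(F_j,N)$. Put $C^j:=\Hom_R(F_j,M)$ and $D^j:=\Hom_R(F_j,N)\subseteq C^j$, with differential $\partial$. A direct computation then gives
\[
G_{\un}\cong \frac{\partial^{-1}({\bf I}^{\un}D^{k+1})\cap C^k}{\partial(C^{k-1})+{\bf I}^{\un}D^k}.
\]
The numerator depends on $\un$, which is awkward, so the next step is an Artin--Rees type argument for the map $\partial\colon C^k\to C^{k+1}$ together with the filtration ${\bf I}^{\un}D^{k+1}$. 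The multigraded Rees module version should give $\partial^{-1}({\bf I}^{\un}D^{k+1}) = Z^k + {\bf I}^{\un-\uc}V'$ for $\un\gg0$, for a fixed submodule $V'$. I expect this to match the abstract's shape $(U+{\bf I}^{\un}V)/{\bf I}^{\un}W$ only after reindexing, with $U$ built from $Z^k=\ker\partial$ and $W$ from $D^k$ and $\partial(C^{k-1})$. Because the numerator and denominator are not directly in that shape, I would instead present $G_{\un}$ as a quotient of two modules of that shape and use short exact sequences. Concretely:
\[
0\to \frac{B+{\bf I}^{\un}D^k}{{\bf I}^{\un}D^k}\to \frac{Z_{\un}}{{\bf I}^{\un}D^k}\to G_{\un}\to 0,
\]
where $B=\partial(C^{k-1})$ and $Z_{\un}$ is the numerator above.

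The Tor case is analogous. Here $F_\bullet\otimes M/{\bf I}^{\un}N = (F_\bullet\otimes M)/{\bf I}^{\un}(\text{image of } F_\bullet\otimes N)$, and the same Artin--Rees step applies.

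With this presentation, the dichotomy comes from the structural theorem applied to each piece. Each piece has regularity either eventually constant or eventually a supremum of linear functions with slopes $w_{s,i}\in\{d_{i,j}\}$. Hypothesis (i) or (ii) is what keeps the relevant local cohomology under control. If $G$ is $R_+$-torsion, the constant regimes coincide with $\reg(G)$, since the limit of the construction, $\un\to\infty$ with $\indeg(I_i)>0$, recovers $G$ in low degrees. The positivity $\indeg(I_i)>0$ ensures that ${\bf I}^{\un}$-terms live in degrees tending to infinity. Hence in each fixed degree $G_{\un}$ agrees with $G$ for $\un\gg0$. This gives $\reg(G_{\un})\ge\reg(G)$ when $G$ is torsion, and it also gives the stabilisation of the vanishing or nonvanishing of $G_{\un}$.

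The main obstacle is controlling the regularity of $G_{\un}$ itself rather than bounding it. Regularity along short exact sequences only gives inequalities. So I would work componentwise with local cohomology $H^i_{R_+}$. Each $H^i_{R_+}(G_{\un})$ should be described via graded pieces of finitely generated modules over the multi-Rees algebra $\bigoplus_{\un}{\bf I}^{\un}t^{\un}$, which are $\bz\times\bn^r$-graded. The end degrees are then eventually given by maxima of linear functions, by the Bruns--Conca style argument applied to $\eend$ of such bigraded modules, with slopes drawn from the $d_{i,j}$. The asymptotic part is the portion of $\reg(G_{\un})$ coming from degrees tending to infinity. The bounded part is eventually $\reg(G)$, using (i) or (ii). Taking the maximum of the two parts, and noting that a supremum of linear functions with some positive slope eventually dominates a constant unless it is eventually constant, yields exactly one of (1) or (2).
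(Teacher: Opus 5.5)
\textbf{There is a genuine gap: your argument never establishes an exact formula for $\reg(G_{\un})$.}

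\textbf{First problem: the reduction stops short of the right shape.} Your opening step (free resolution plus multigraded Artin--Rees, giving $\partial^{-1}({\bf I}^{\un}D^{k+1})=Z^k+{\bf I}^{\un-\uc}V'$) is essentially the content of Lemma~\ref{lem:ext:interpret}. You stop one observation short of the paper's reduction. Take $V':=\partial^{-1}({\bf I}^{\uc}D^{k+1})$; then ${\bf I}^{\uc}D^k\subseteq V'$. Pass to $T:=C^k/B$ and set $U:=Z^k/B\cong G$, $V:=$ the image of $V'$, and $W:=$ the image of ${\bf I}^{\uc}D^k$, so $W\subseteq V$. This gives $G_{\un}\cong(U+{\bf I}^{\un-\uc}V)/{\bf I}^{\un-\uc}W$ directly. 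Hypothesis (ii) passes to the submodule ${\bf I}^{\un-\uc}V/{\bf I}^{\un-\uc}W$, and Theorem~\ref{th:main1} finishes; this is exactly the paper's proof.

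Your detour through $0\to(B+{\bf I}^{\un}D^k)/{\bf I}^{\un}D^k\to Z_{\un}/{\bf I}^{\un}D^k\to G_{\un}\to0$ does damage. The conditions (i)/(ii) are imposed on $G$ and $G_{\un}$ and do not transfer to the middle term. Its ``$U$'' is $Z^k$, which is generally not $R_+$-torsion even when $G$ is, and its part ${\bf I}^{\un-\uc}V'/{\bf I}^{\un}D^k$ need not be $R_+$-torsion under (ii). So the structural theorem cannot be applied to it. In any case the sequence only yields $\reg(G_{\un})\le\max\{\reg(\text{left})-1,\reg(\text{middle})\}$, an inequality rather than a formula.

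\textbf{Second problem: the proposed way to recover equality does not work.} The local cohomology modules $H^i_{R_+}(G_{\un})$ are not finitely generated. Nor is the family $(G_{\un})$ in general the family of components of a finitely generated $\mathcal{R}({\bf I})$-module; already $\bigoplus_{\un}U/(U\cap{\bf I}^{\un}W)$ is not, since the fixed low-degree part of $U$ never gets absorbed. Hence neither a Bruns--Conca type $\eend$-result nor Theorem~\ref{th:reggraded} applies; the latter relies on $\reg=\max_k\{\eend(\Tor^A_k(-,R_0))-k\}$ for the components of one finitely generated multigraded module. Your split of $\reg(G_{\un})$ into a ``bounded part'' and an ``asymptotic part'' has no rigorous meaning. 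Also, agreement of $G_{\un}$ with $G$ in each fixed degree does not by itself give $\reg(G_{\un})\ge\reg(G)$; you would need $U$ to embed in $G_{\un}$ as an $R_+$-torsion submodule.

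\textbf{The missing idea} is how the torsion hypothesis produces an exact formula. Set $A_{\un}:=(U+{\bf I}^{\un}W)/{\bf I}^{\un}W$, $B_{\un}:={\bf I}^{\un}V/{\bf I}^{\un}W$ and $C_{\un}=A_{\un}+B_{\un}$. The sequence $0\to A_{\un}\cap B_{\un}\to A_{\un}\oplus B_{\un}\to C_{\un}\to0$ has an $R_+$-torsion left term under (i) or (ii). Lemma~\ref{lem:reglem}(4) then gives $\reg(C_{\un})=\max\{\reg(A_{\un}),\reg(B_{\un})\}$ exactly. The term $B_{\un}$ is a component of a finitely generated Rees module and is handled by Theorem~\ref{th:reggraded}. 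Artin--Rees gives $A_{\un}\cong U/{\bf I}^{\un-\um}(U\cap{\bf I}^{\um}W)$, which is handled by Proposition~\ref{prop:regquotient}. There the equality $\reg(U/{\bf I}^{\un}N')=\reg({\bf I}^{\un}N')-1$ is forced because $\reg({\bf I}^{\un}N')$ eventually exceeds $\reg(U)$; this is where $\indeg(I_i)>0$ is used.
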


Note that in Theorem~\ref{th:main}, if $L$ or $M$ is $R_+$-torsion, then so is $G$. Also, if $M/N$ is $R_+$-torsion and $R_+ \subseteq \sqrt{I_i}$ for each $i$, then $G_{\un}$ is $R_+$-torsion for all $\un \ge \underline{0}$. In the case when $G=0$ and $G_{\un} \neq 0$ for all $\un \gg 0$, it follows from Theorem~\ref{th:main} that $\reg(G_{\un})$ is eventually the supremum of finitely many linear functions of $\un$. In the special case where each ideal $I_i$ is generated in a single degree, say $d_i$, for $1 \le i \le r$, we have either $\reg(G_{\un}) = \reg(G)$ for all $\un \gg 0$, or $\reg(G_{\un})$ is eventually a linear function of $\un$, where the coefficient of $n_i$ is $d_i$. It is worth mentioning the special case of a single ideal separately as a corollary, where $\reg(G_n)$ is eventually either constant or linear in $n$. Since the maximum of finitely many linear functions of $n$ is eventually a linear function (the one with largest slope, and among equal slopes the largest intercept), the following corollary is immediate from Theorem~\ref{th:main}.

\begin{corollary}\label{cor:main}
    Let $R$, $L$, $M$ and $N$ be as in \textnormal{Setup~\ref{setup1}}. Let $I$ be a homogeneous ideal with $\indeg(I)>0$. Fix $k \ge 0$. Set $G := \Ext_R^k(L,M)$ and $G_n := \Ext_R^k(L, M/I^n N)$; or $G := \Tor_k^R(L,M)$ and $G_n := \Tor_k^R(L,M/I^nN)$, for $n \ge 0$. Assume that $G$ is $R_+$-torsion, or $G_n$ is $R_+$-torsion for all $n \gg 0$.
    
Then, either $G_n = 0$ for all $n \gg 0$, or $G_n \neq 0$ for all $n \gg 0$. Moreover, in the second case, exactly one of the following holds:
\begin{enumerate}[\rm (1)]
    \item $\reg(G_n) = \reg(G)$ for all $n \gg 0$.
    \item $\reg(G_n) = w n + c$ for all $n \gg 0$, where $c \in \bz$ and $w$ is one of the generating degrees of the ideal $I$.
\end{enumerate}
\end{corollary}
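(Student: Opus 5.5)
The corollary is the case $r=1$ of Theorem~\ref{th:main}, so the plan is to specialize and then simplify a maximum of finitely many linear functions of one variable. Take $r=1$, $I_1:=I$ with homogeneous generators $f_{1,1},\dots,f_{1,g_1}$ of degrees $d_{1,1},\dots,d_{1,g_1}$. Then ${\bf I}^{\un}=I^n$ and $G_{\un}=G_n$. The condition $\un\gg 0$ in $\bn^1$ means $n\ge c$ for some $c$. Also $\indeg(I)>0$ is exactly the hypothesis ``$\indeg(I_i)>0$ for some $i$'', and the torsion hypotheses (i), (ii) translate verbatim. So Theorem~\ref{th:main} applies directly.

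It gives the dichotomy $G_n=0$ for all $n\gg0$ or $G_n\neq 0$ for all $n\gg0$. In the second case it also says that exactly one of two alternatives holds.
\begin{enumerate}[\rm (1)]
\item $\reg(G_n)=\reg(G)$ for all $n\gg0$, which is already alternative (1) of the corollary.
\item $\reg(G_n)=\sup\{w_s n+c_s : 1\le s\le m\}$ for $n\gg0$, with each $w_s\in\{d_{1,1},\dots,d_{1,g_1}\}$ and $c_s\in\bz$.
\end{enumerate}

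The only remaining step is to reduce the second alternative to a single linear function. Choose $s_0$ such that $w_{s_0}$ is maximal among the $w_s$, and among those indices with maximal slope, $c_{s_0}$ is maximal. For any $s$ with $w_s<w_{s_0}$ we have $w_sn+c_s< w_{s_0}n+c_{s_0}$ as soon as $n>(c_s-c_{s_0})/(w_{s_0}-w_s)$. For $s$ with $w_s=w_{s_0}$, the choice of $c_{s_0}$ gives $w_sn+c_s\le w_{s_0}n+c_{s_0}$ for all $n$.

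Since there are only finitely many $s$, enlarging the threshold gives $\reg(G_n)=w n+c$ for all $n\gg0$, where $w:=w_{s_0}$ is a generating degree of $I$ and $c:=c_{s_0}$.

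Exclusivity of (1) and (2) is inherited from the theorem. In any case the corollary's (2) is just a restatement of the theorem's (2) in this setting, so the case split carries over unchanged. There is no real obstacle here; the only point needing care is the tie-breaking among equal slopes when choosing the dominant linear function, handled above.
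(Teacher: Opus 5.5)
Your proposal is correct and follows essentially the same route as the paper, which also obtains the corollary as the case $r=1$ of Theorem~\ref{th:main}, using that a maximum of finitely many linear functions of $n$ eventually equals the one with the largest slope (ties broken by the largest intercept). Your treatment of exclusivity is also sound: a single linear function is a special case of the theorem's alternative (2), so the exclusivity in Theorem~\ref{th:main} carries over to the corollary.
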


We provide some examples to illustrate the main results. Example~\ref{example2} shows that both conclusions (1) and (2) of Theorem~\ref{th:main} and Corollary~\ref{cor:main} can occur, whereas Example~\ref{exam:non-linear}.(1) shows that, in the case of several ideals, these functions need not be asymptotically linear.

We organize the latter sections of this article as follows. In Section~\ref{sec2}, we recall some definitions and known results. In Section~\ref{sec3}, for each fixed $k\ge 0$, we investigate the asymptotic behaviour of regularity of $\Ext^k_R(L,{\bf I}^{\un}M/{\bf I}^{\un}N)$ and $\Tor_k^R(L,{\bf I}^{\un}M/{\bf I}^{\un}N)$ as functions of $\un$. In Section~\ref{sec4}, we prove Theorem~\ref{th:main} by establishing an asymptotic formula for $\reg((U + {\bf I}^{\un}V)/{\bf I}^{\un}W)$ under certain conditions in Theorem~\ref{th:main1}. Finally, in Section~\ref{sec:examples}, we provide concrete examples to complement the main results. 

\subsection*{Acknowledgments}
Pramanik received financial support from the Government of India through the Prime Minister's Research Fellowship (PMRF) for his Ph.D.


\section{Preliminaries}\label{sec2}

In this section, we fix the notation, recall the definition and basic properties of Castelnuovo-Mumford regularity, and gather the results needed in the sequel. Throughout this section, we assume Setup~\ref{setup1}. For each $i \ge 0$, let $H^i_{R_+}(M)$ denote the $i$th local cohomology module of $M$ with support in $R_+$. For $n \in \bz$, we denote by $M_n$ the $n$th graded component of $M$. Denote $$\eend(M) := \sup\{ n \in \bz : M_n \neq 0\},$$ 
with the convention that $\eend(0) = - \infty$. 

\begin{definition}\label{def:reg}
The \emph{Castelnuovo-Mumford regularity} (or simply \emph{regularity}) of an $R$-module $M$ is defined as 
$$ \reg_R(M) := \sup\{ \eend(H^i_{R_+}(M))+i : i \ge 0 \},$$
with convention $\reg_R(0) = -\infty$. We write $\reg(M)$ instead of $\reg_R(M)$ when the base ring $R$ is clear from the context.
\end{definition}

We gather the following basic properties of regularity.

\begin{lemma}\label{lem:reglem}
    Let $0\to A \to B \to C \to 0$ be a short exact sequence of finitely generated graded $R$-modules. Then the following hold.
    \begin{enumerate}[\rm (1)]
    \item $\reg(A) \le \max\{\reg(B), \reg(C) + 1\}$.
    \item $\reg(B) \le \max\{\reg(A), \reg(C) \}$. The inequality becomes equality whenever the short exact sequence splits.
    \item $\reg(C )\le \max\{\reg(A) -1, \reg(B)\}$.
    \item{\cite[Prop.~8.1.11]{BCRV22}} If $A$ is an $R_+$-torsion module, then 
    $$\reg(B) = \max \{ \reg(A), \reg(C) \}.$$
    \end{enumerate}
\end{lemma}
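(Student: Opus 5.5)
We argue via the long exact sequence of local cohomology attached to $0\to A\to B\to C\to 0$,
$$\cdots\to H^{i-1}_{R_+}(C)\to H^i_{R_+}(A)\to H^i_{R_+}(B)\to H^i_{R_+}(C)\to H^{i+1}_{R_+}(A)\to\cdots,$$
whose maps are all homogeneous of degree zero. The tool is this: if $X\to Y\to Z$ is exact in graded modules, then $\eend(Y)\le\max\{\eend(X),\eend(Z)\}$, because if $X_n=Z_n=0$ then $Y_n=0$. With the conventions $\eend(0)=-\infty$ and $\reg(0)=-\infty$, the degenerate cases take care of themselves.

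For (1), apply the principle to $H^{i-1}_{R_+}(C)\to H^i_{R_+}(A)\to H^i_{R_+}(B)$. This gives
$$\eend H^i_{R_+}(A)+i\le\max\{\eend H^{i-1}_{R_+}(C)+(i-1)+1,\ \eend H^i_{R_+}(B)+i\}\le\max\{\reg(C)+1,\reg(B)\},$$
and taking the supremum over $i$ proves (1).

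For (2), use $H^i_{R_+}(A)\to H^i_{R_+}(B)\to H^i_{R_+}(C)$ in the same way. If the sequence splits, then $H^i_{R_+}(B)\cong H^i_{R_+}(A)\oplus H^i_{R_+}(C)$ because local cohomology is additive. Hence $\eend H^i_{R_+}(B)$ is the maximum of the two ends, and the inequality becomes an equality.

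For (3), use $H^i_{R_+}(B)\to H^i_{R_+}(C)\to H^{i+1}_{R_+}(A)$. This gives
$$\eend H^i_{R_+}(C)+i\le\max\{\reg(B),\ \eend H^{i+1}_{R_+}(A)+(i+1)-1\}\le\max\{\reg(B),\reg(A)-1\}.$$

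For (4), suppose $A$ is $R_+$-torsion. Then $H^0_{R_+}(A)=A$ and $H^i_{R_+}(A)=0$ for $i>0$ (a standard fact, obtained for instance from the Čech complex or by injective resolutions of torsion modules). The long exact sequence then gives $H^i_{R_+}(B)\cong H^i_{R_+}(C)$ for $i\ge 1$ and a short exact sequence $0\to A\to H^0_{R_+}(B)\to H^0_{R_+}(C)\to 0$. The key step is this surjectivity, which holds because the next term $H^1_{R_+}(A)$ vanishes.

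From the short exact sequence of graded modules, $\eend H^0_{R_+}(B)=\max\{\eend(A),\eend H^0_{R_+}(C)\}$, since ends of modules are additive along short exact sequences in each degree. Also $\reg(A)=\eend(A)$. Combining these,
$$\reg(B)=\max\Big\{\eend(A),\ \eend H^0_{R_+}(C),\ \sup_{i\ge1}\big(\eend H^i_{R_+}(C)+i\big)\Big\}=\max\{\reg(A),\reg(C)\}.$$
No step is a serious obstacle; the only point requiring care is the vanishing $H^1_{R_+}(A)=0$ in (4).
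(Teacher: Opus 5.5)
Your proposal is correct. The paper gives no proof of (1)--(3), treating them as basic properties, and for (4) it simply cites \cite[Prop.~8.1.11]{BCRV22}; your argument, via the degree-zero long exact sequence of $H^i_{R_+}$ together with the vanishing $H^i_{R_+}(A)=0$ for $i>0$ when $A$ is $R_+$-torsion, is the standard proof of all four parts.
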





For a finitely generated $\bz^r$-graded module $\mscrm$ over a Noetherian standard $\bn^r$-graded ring $S$, it follows from {\cite[Thm.~3.4.(i)]{W04}} that $\Ass_{S_{\underline{0}}}(\mscrm_{\un})$ becomes independent of $\un$ for all $\un \gg 0$. Since $\mscrm_{\un} = 0$ if and only if $\Ass_{S_{\underline{0}}}(\mscrm_{\un}) = \emptyset$, we obtain the following consequence.

\begin{lemma}\label{lem:stab}
Let $\mscrm$ be a finitely generated $\bz^r$-graded module over a Noetherian standard $\bn^r$-graded ring $S$. Then either $\mscrm_{\un} = 0 $ for all $\un \gg 0$, or $ \mscrm_{\un} \neq 0$ for all $\un \gg 0$.
\end{lemma}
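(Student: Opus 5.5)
The plan is to deduce the dichotomy directly from the cited stability of associated primes \cite[Thm.~3.4.(i)]{W04}, exactly as indicated in the sentence preceding the lemma. The whole argument rests on one equivalence: for each $\un$, the graded component $\mscrm_{\un}$ is a finitely generated module over the Noetherian ring $S_{\underline{0}}$.

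The first step is to record why $\mscrm_{\un}$ is finitely generated over $S_{\underline{0}}$. Since $S$ is a Noetherian standard $\bn^r$-graded ring, $S_{\underline{0}}$ is Noetherian and $S$ is generated over $S_{\underline{0}}$ by finitely many elements of degrees $e_1,\dots,e_r$. Since $\mscrm$ is finitely generated by homogeneous elements of finitely many degrees, each graded piece $\mscrm_{\un}$ is spanned over $S_{\underline{0}}$ by finitely many products of a monomial in the degree-$e_i$ generators with a generator of $\mscrm$. Hence $\mscrm_{\un}$ is a finitely generated $S_{\underline{0}}$-module.

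The second step uses the standard fact that a finitely generated module over a Noetherian ring is zero if and only if its set of associated primes is empty. So $\mscrm_{\un}=0$ if and only if $\Ass_{S_{\underline{0}}}(\mscrm_{\un})=\emptyset$.

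The third step invokes \cite[Thm.~3.4.(i)]{W04}. It gives $\uc\in\bn^r$ and a fixed finite set $A$ of primes of $S_{\underline{0}}$ such that $\Ass_{S_{\underline{0}}}(\mscrm_{\un})=A$ for all $\un\ge\uc$. If $A=\emptyset$, then $\mscrm_{\un}=0$ for all $\un\ge\uc$. If $A\neq\emptyset$, then $\mscrm_{\un}\neq 0$ for all $\un\ge\uc$.

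The only delicate point is making sure the hypotheses of \cite{W04} match ours. That result concerns standard multigraded rings and finitely generated multigraded modules, and it takes associated primes over the degree-zero component. I expect this to be a matter of checking conventions (for instance, that $\un\gg 0$ means $\un\ge\uc$ componentwise) rather than a genuine obstacle. No further computation is needed.
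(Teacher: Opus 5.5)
Your proposal is correct and follows the paper's argument exactly: the paper also obtains the dichotomy from the eventual stability of $\Ass_{S_{\underline{0}}}(\mscrm_{\un})$ given by \cite[Thm.~3.4.(i)]{W04}, together with the fact that $\mscrm_{\un}=0$ if and only if $\Ass_{S_{\underline{0}}}(\mscrm_{\un})=\emptyset$. Your extra check that $\mscrm_{\un}$ is finitely generated over $S_{\underline{0}}$ is harmless but not needed for that equivalence, since any nonzero module over a Noetherian ring already has an associated prime.
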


In the following theorem, we prove the asymptotic behaviour of the regularity of the graded components of a multigraded module. As we could not find any reference for this result in the literature, we include a sketch of its proof. The bigraded version of this result can be found in \cite[Thm.~2.2]{TW05} or \cite[Prop.~3.1]{CGN22}.

\begin{theorem}\label{th:reggraded}
Let $S = R_0[x_1, \dots, x_d, z_{i,j} : 1 \le i \le r , 1 \le j \le g_i]$ be a $\bn \times \bn^r$-graded Noetherian ring, where $\deg(x_i) = (1, \underline{0})$ and $\deg(z_{i,j})= (d_{i,j},e_i)$. Let $\mathscr{M}$ be a finitely generated $\bz \times \bz^r$-graded $S$-module. Set $R := R_0[x_1, \dots, x_d]$, where $\deg(x_i) = 1$. For each $\underline{n} \in \bz^r$, denote $\mathscr{M}_{(\star,\underline{n})} := \bigoplus_{l \in \bz}\mathscr{M}_{(l,\un)}$. Then, either ${\mathscr{M}_{(\star,\underline{n})}} = 0$ for all $\un \gg 0$, or  
\[
\reg_R\big({\mathscr{M}_{(\star,\underline{n})}}\big) = \sup\big\{ \mathscr{L}_{s}(\un) : 1 \le s \le m\big\} \; \mbox{ for all } \; \un \gg 0,
\]
where each $\mathscr{L}_s : \bn^r \to \bz$ is a linear function defined as $$\mathscr{L}_s(\underline{n}) = (w_{s,1}n_1 + w_{s,2}n_2 + \cdots + {w_{s,r}n_r}) + c_{s}$$ 
for some $w_{s,i}\in \{ d_{i,1},\dots,d_{i,g_{i}}\}$ and $c_{s} \in \bz$ for $1 \le s \le m$ and $1\le i\le r$. 
\end{theorem}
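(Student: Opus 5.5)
The plan is to follow the bigraded strategy of Trung--Wang \cite[Thm.~2.2]{TW05} and Bruns--Conca \cite[Thm.~2.2]{BC17}, adapted to $\bz\times\bz^r$-gradings. The first step is to reduce everything to end degrees of multigraded modules. Let $\mathfrak{m} := R_+S = \langle x_1,\dots,x_d\rangle S$. Since $R\to S$ is flat on each graded strand and $\mathfrak m$ is generated by degree $(1,\underline 0)$ elements, local cohomology commutes with taking strands:
\[
H^i_{\mathfrak m}(\mscrm)_{(\star,\un)} \cong H^i_{R_+}\big(\mscrm_{(\star,\un)}\big).
\]
This holds because the Čech complex on $x_1,\dots,x_d$ is homogeneous of degree $(\ast,\underline 0)$. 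Hence
\[
\reg_R(\mscrm_{(\star,\un)})=\max_{0\le i\le d}\big\{\eend\big(H^i_{\mathfrak m}(\mscrm)_{(\star,\un)}\big)+i\big\},
\]
and it suffices to show that, for each $i$, $\eend(H^i_{\mathfrak m}(\mscrm)_{(\star,\un)})$ is eventually $-\infty$ or a max of linear functions of the required shape. The dichotomy between vanishing and nonvanishing of $\mscrm_{(\star,\un)}$ for $\un\gg0$ comes from Lemma~\ref{lem:stab}, applied to $\mscrm$ viewed as a $\bz^r$-graded module over $S$ with the $\bn^r$-grading given by $z_{i,j}$. When $\mscrm_{(\star,\un)}\neq0$, its regularity is finite, so the sup is not $-\infty$.

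To handle $\eend$, I would use a free resolution or the Čech description to see that each $H^i_{\mathfrak m}(\mscrm)$ is a $\bz\times\bz^r$-graded module over $S$ whose $\un$-strands are finitely generated up to Artinian behaviour in the $x$-direction. Cleaner, I think, is to pass through graded local duality over $R_0[x]$. Alternatively, and this is what I would try first, I would use a Koszul or Tor reformulation. By the standard characterization of regularity via $\Tor^R_j(\cdot,R_0)$ when $R_0$ is a field, together with a reduction to $R_0$ local Artinian/field by base change and faithfully flat extension, one gets
\[
\reg(\mscrm_{(\star,\un)})=\max_j\{\eend(\Tor^R_j(\mscrm_{(\star,\un)},R_0))-j\}.
\]
Now $\Tor^R_j(\mscrm,R_0)$, computed by the Koszul complex on $x_1,\dots,x_d$ over $S$, is a finitely generated $\bz\times\bz^r$-graded module over $T:=S/\mathfrak m = R_0[z_{i,j}]$. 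Its $\un$-strand is $\Tor_j^R(\mscrm_{(\star,\un)},R_0)$.

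The key step is then a finite-generation argument. Let $F := \Tor_j^R(\mscrm,R_0)$, a finitely generated module over $T=R_0[z_{i,j}]$ with $\deg z_{i,j}=(d_{i,j},e_i)$. I would take a finite filtration of $F$ with factors of the form $(T/\mathfrak p)(-a,-\ub)$, or better a Hilbert-type decomposition into Stanley-like pieces $R_0$-module $\otimes$ a polynomial ring on a subset $Z$ of the $z_{i,j}$, shifted. Monomial pieces are cleanest, so I would first reduce to $R_0$ a field, then use Gröbner degeneration, which preserves multigraded Hilbert functions and hence the supports of strands. For a single Stanley piece $u\cdot R_0[Z]$ with $\deg u=(a,\ub)$, the maximal first coordinate in the $\un$-strand is $a+\max\{\sum d_{i,j}c_{i,j}\}$. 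The max is taken over monomials in $Z$ of multidegree $\un-\ub$. For $\un\gg0$ this is nonempty only if $Z$ contains variables of every index $i$, and then it equals $a+\sum_i \max\{d_{i,j}: z_{i,j}\in Z\}(n_i-b_i)$. That is a linear function with $w_{s,i}\in\{d_{i,1},\dots,d_{i,g_i}\}$. Since $\eend$ of a finite direct sum of vector spaces is the max of the $\eend$'s, the result follows.

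The main obstacle is the reduction to $R_0$ a field and the validity of the Tor characterization of regularity over a general Noetherian $R_0$. To address it, I would localize at maximal ideals of $R_0$ and then complete or pass to a residue-field extension. Regularity is compatible with flat base change and is a max over localizations at finitely many relevant maximal ideals, since the supports are finite in the relevant sense. Alternatively, I could avoid Tor entirely and apply the Stanley-decomposition argument directly to $\Ext$-duals of $H^i_{\mathfrak m}(\mscrm)$ via graded local duality over $R_0[x_1,\dots,x_d]$.
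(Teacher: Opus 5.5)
Your architecture matches the paper's. You compute $\reg$ via Koszul homology on $x_1,\dots,x_d$, observe that these homologies assemble into finitely generated $\bz\times\bz^r$-graded modules $F$ over $R_0[Z_{i,j}]$, and read off the ends of their $\un$-strands. The paper simply cites \cite[Thm.~8.3.7]{BCRV22} for this last step. Two steps, however, fail as written.

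\textbf{The $\Tor^R$ formula.} The identity $\reg(M)=\max_j\{\eend(\Tor^R_j(M,R_0))-j\}$ is false when $R$ is not a polynomial ring over $R_0$, and Setup~\ref{setup1} allows that. For $R=K[x]/(x^2)$ and $M=R$ one has $\reg(R)=1$ since $H^0_{R_+}(R)=R$, whereas the right-hand side is $0$. Correspondingly, the Koszul complex on $x$ over $R$ does not resolve $R_0$, so it does not compute $\Tor^R$. What you actually compute, $H_j(x;-)$, is $\Tor^A_j(-,R_0)$ for the polynomial ring $A=R_0[X_1,\dots,X_d]$ mapping onto $R$. Since $\reg_R=\reg_A$ by independence of base, the correct statement is $\reg(M)=\max_{0\le j\le d}\{\eend(\Tor^A_j(M,R_0))-j\}$, and this holds for every Noetherian $R_0$. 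This is exactly what the paper does, and it removes the Tor half of your ``main obstacle''. Similarly, $S/R_+S$ is only a quotient of $R_0[Z_{i,j}]$, so you should view $F$ as a module over the polynomial ring.

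\textbf{The reduction to a field.} Your reduction to $R_0$ a field does not work. Localizing and then completing or enlarging the residue field gives a faithfully flat extension of $R_{0,\mathfrak m}$. No faithfully flat extension of a local ring that is not already a field is a field. So the Gr\"obner/Stanley step remains unjustified for general $R_0$. The claim that everything is a max over ``finitely many relevant maximal ideals'' is also left without an argument that is uniform in $\un$. The repair is to note that $\eend$ only sees supports. You can do this in either of two ways.
\begin{enumerate}[\rm (a)]
\item Use the prime filtration you mentioned and then discarded. $F$ has a finite filtration with factors $(T/\mathfrak P)(-a,-\underline{b})$, where $\mathfrak P$ is a $\bz\times\bz^r$-graded prime of $T=R_0[Z_{i,j}]$. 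Since $T/\mathfrak P$ is a domain, $(T/\mathfrak P)_{(l,\un)}\neq0$ iff there is a monomial of degree $(l,\un)$ in the $Z_{i,j}\notin\mathfrak P$. So each factor behaves exactly like your Stanley piece with $Z=\{Z_{i,j}\notin\mathfrak P\}$, and $F_{(l,\un)}\neq0$ iff some factor is nonzero in degree $(l,\un)$.
\item Base change non-flatly. Each $F_{(l,\un)}$ is a finitely generated $R_0$-module whose associated primes, in particular the minimal primes of its support, lie in the finite set $\Ass_{R_0}(F)$. By Nakayama, $F_{(l,\un)}\neq 0$ iff $F_{(l,\un)}\otimes_{R_0}k(\mathfrak p)\neq0$ for some $\mathfrak p\in\Ass_{R_0}(F)$. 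This reduces to finitely many modules over $k(\mathfrak p)[Z_{i,j}]$, where your Stanley-decomposition computation applies verbatim.
\end{enumerate}
With these two corrections your argument becomes a self-contained proof, including the input the paper takes from \cite{BCRV22}.
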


\begin{proof}
    Consider the polynomial ring $A := R_0[X_1, \dots, X_d] $ with standard $\bn$-graded structure. Then there is a surjective $R_0$-algebra homomorphism $A \to R$, which maps $X_i \mapsto x_i$ for each $i$. Through this homomorphism, every finitely generated graded $R$-module becomes a finitely generated graded $A$-module. Then, for any finitely generated graded $R$-module $M$, since $H_{R_+}^i(M) \cong H_{A_+}^i(M)$ for all $i$, we have $\reg_R(M) = \reg_A(M)$.
    Note that
    \begin{equation}\label{eq:regtorint}
    \reg_A(M) 
    = \max \{ \eend(\Tor_k^A(M,R_0)) -k : 0 \le k \le d\},
    \end{equation} 
    see, e.g., \cite[pp.~279 and Thm.~8.1.3]{BCRV22}. Also, by the graded version of Nakayama's lemma, $\Tor_0^A(M,R_0) = 0$ if and only if $M=0$. By Lemma~\ref{lem:stab}, either ${\mathscr{M}_{(\star,\underline{n})}} = 0$ for all $\un \gg 0$, or ${\mathscr{M}_{(\star,\underline{n})}} \neq 0$ for all $\un \gg 0$. We may assume that ${\mathscr{M}_{(\star,\underline{n})}} \neq 0$ for all $\un \gg 0$.
    
    Consider the $\bn \times \bn^r$-graded polynomial ring $A' := A[Z_{i,j} :1 \le i \le r , 1 \le j \le g_i]$, where $\deg(X_i) = (1, \underline{0})$ and $\deg(Z_{ij})= (d_{i,j},e_i)$. Then we have an $R_0$-algebra map $A' \to S$ which sends $X_t \mapsto x_t$ and $Z_{i,j} \mapsto z_{i,j}$. Moreover $\mscrm$ becomes a finitely generated $\bz \times \bz^r$-graded $A'$-module. Consequently, 
    $$\Tor_k^{A}(\mscrm, R_0) = \bigoplus_{\un \in \bz^r} \Tor_k^{A}(\mscrm_{(\star,\un)}, R_0)$$
    is a finitely generated $\bz^r$-graded module over the Noetherian standard $\bn^r$-graded ring $A' = A[Z_{i,j} :1 \le i \le r , 1 \le j \le g_i]$, where $\deg(Z_{i,j}) = e_i$. Let $0 \le k \le d$. In view of \cite[Thm.~8.3.7]{BCRV22}, either $\Tor_k^{A}(\mscrm_{(\star,\un)}, R_0) = 0$ for all $\un \gg 0$, or
    \begin{equation}\label{eq:endtor}
    \eend(\Tor_k^{A}(\mscrm_{(\star,\un)}, R_0)) = \sup \{ \mathscr{L}_{s}(\un) : \; 1\le s \le m\} \; \text{ for all } \un \gg 0,
    \end{equation} 
    where $\mathscr{L}_1, \dots, \mathscr{L}_m : \bz^r \to \bz$ are linear functions defined as 
    $$\mathscr{L}_s(\underline{n}) = (w_{s,1}n_1 + w_{s,2}n_2 + \cdots + {w_{s,r}n_r}) + c_{s},$$ 
    for some $w_{s,i}\in \{ d_{i,1},\dots,d_{i,g_{i}}\}$ and $c_{s} \in \bz$ for $1 \le s \le m$ and $1\le i\le r$. Thus, our claim follows from \eqref{eq:regtorint} and \eqref{eq:endtor}.
\end{proof}




The following lemma can be extracted from the proof of \cite[Cor.~3.4]{KW04}. We also refer the reader to {\cite[Lem.~3.3]{GP25}}, where a detailed proof of the corresponding result for a single ideal is given, together with the construction of the module $U$, which is crucial for the proof of our main result. The same arguments, together with the multigraded version of Artin-Rees lemma (cf.~\cite[Thm.~17.1.6]{SH06}), extend naturally to the case of several ideals.

\begin{lemma}\label{lem:ext:interpret}
With \textnormal{Setup~\ref{setup1}}, fix $ k \ge 0$. Then, there exists $\uc \in \bn^r$ such that
$$ \Ext_{R}^{k}(L, M/{\bf I}^{\un}N) \cong (U+{\bf I}^{\un - \uc}V)/{\bf I}^{\un - \uc} W \; \text{ for all } \un \ge \uc$$
and for some graded submodules $U$, $V$ and $W$ of a finitely generated graded $R$-module $T$ satisfying $W \subseteq V$ and $U\cong \Ext_R^k(L,M)$.
\end{lemma}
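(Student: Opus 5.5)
The plan is to follow the Katz--West strategy, with the multigraded Artin--Rees lemma as the main new input. First fix a graded free resolution $F_\bullet \to L$ by finitely generated graded free modules $F_j$. Then $\Ext^k_R(L, M/{\bf I}^{\un}N)$ is the $k$th cohomology of $\Hom_R(F_\bullet, M/{\bf I}^{\un}N)$. Since $F_j$ is free of finite rank, there is an identification
$$\Hom_R(F_j, M/{\bf I}^{\un}N) \cong \Hom_R(F_j,M)/{\bf I}^{\un}\Hom_R(F_j,N),$$
which is compatible with the differentials. Set $P^j := \Hom_R(F_j,M)$ and $Q^j := \Hom_R(F_j,N)\subseteq P^j$, with maps $\partial^j\colon P^j\to P^{j+1}$ sending $Q^j$ into $Q^{j+1}$. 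Then
$$\Ext^k_R(L,M/{\bf I}^{\un}N) \cong \frac{\{x\in P^k : \partial^k x \in {\bf I}^{\un}Q^{k+1}\}}{\partial^{k-1}(P^{k-1}) + {\bf I}^{\un}Q^k}.$$

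The key step is to rewrite the numerator $Z_{\un} := (\partial^k)^{-1}({\bf I}^{\un}Q^{k+1})$. We have $\partial^k x\in {\bf I}^{\un}Q^{k+1}\cap \operatorname{im}\partial^k$. Apply the multigraded Artin--Rees lemma \cite[Thm.~17.1.6]{SH06} to the submodule $\operatorname{im}\partial^k$ and the filtration ${\bf I}^{\un}Q^{k+1}$ of $P^{k+1}$, done inside the $\bn^r$-graded Rees module $\bigoplus_{\un}{\bf I}^{\un}Q^{k+1}$ over the Rees ring. This gives $\uc\in\bn^r$ such that, for all $\un\ge\uc$,
$$\operatorname{im}\partial^k\cap{\bf I}^{\un}Q^{k+1} = {\bf I}^{\un-\uc}\big(\operatorname{im}\partial^k\cap {\bf I}^{\uc}Q^{k+1}\big).$$
Set $V:=(\partial^k)^{-1}({\bf I}^{\uc}Q^{k+1})\subseteq P^k$. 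Lifting gives $Z_{\un} = \ker\partial^k + {\bf I}^{\un-\uc}V$ for $\un\ge\uc$.

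Now take $T := P^k/\partial^{k-1}(P^{k-1})$ and write bars for images in $T$. Define
$$U := \overline{\ker\partial^k}\cong\Ext^k_R(L,M),\qquad V':=\overline{V},\qquad W':=\overline{{\bf I}^{\uc}Q^k}.$$
For the quotient we use ${\bf I}^{\un}Q^k = {\bf I}^{\un-\uc}({\bf I}^{\uc}Q^k)$, which holds since ${\bf I}^{\un} = {\bf I}^{\un-\uc}{\bf I}^{\uc}$. Hence the quotient becomes $(U+{\bf I}^{\un-\uc}V')/{\bf I}^{\un-\uc}W'$, as claimed.

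It remains to check $W'\subseteq V'$, i.e.\ ${\bf I}^{\uc}Q^k\subseteq V$. This holds because $\partial^k({\bf I}^{\uc}Q^k)\subseteq {\bf I}^{\uc}Q^{k+1}$. The module $T$ is a finitely generated graded $R$-module, and all the submodules above are graded because every map involved is homogeneous.

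The main obstacle is the Artin--Rees step in the multigraded setting. We need a single $\uc$ that works for all $\un\ge\uc$, not merely along each coordinate direction. I would get this from Noetherianity of the multi-Rees module $\bigoplus_{\un}(\operatorname{im}\partial^k\cap {\bf I}^{\un}Q^{k+1})$: its generators lie in degrees $\le$ some $\uc$. One must then pass from generators in degrees $\le\uc$ to the exact form ${\bf I}^{\un-\uc}(\cdots\cap{\bf I}^{\uc}Q^{k+1})$ for $\un\ge\uc$. To do this, take $\uc$ to be the componentwise maximum of the generator degrees and absorb the extra powers of the ideals, which is the standard argument of \cite[Thm.~17.1.6]{SH06}.
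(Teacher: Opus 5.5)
Your proof is correct and follows essentially the route the paper has in mind. The paper gives no proof of its own but defers to the Katz--West construction \cite{KW04} and its single-ideal write-up \cite{GP25}, with the multigraded Artin--Rees lemma supplying a single $\uc$, and your argument carries this out: $\Ext^k_R(L,M/{\bf I}^{\un}N)$ is computed from $\Hom_R(F_\bullet,M)/{\bf I}^{\un}\Hom_R(F_\bullet,N)$, the cocycles are written as $\ker\partial^k+{\bf I}^{\un-\uc}V$, and you pass to $T=\Hom_R(F_k,M)/\operatorname{im}\partial^{k-1}$ with $U=\ker\partial^k/\operatorname{im}\partial^{k-1}$.
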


A similar interpretation of $\Tor^R_k(L,M/{\bf I}^{\un}N)$ also exists, see \cite[Lem.~3.4]{GP25} for the single ideal case.

\begin{lemma}\label{lem:tor:interpret}
With \textnormal{Setup~\ref{setup1}}, fix $ k \ge 0$. Then, there exists $\uc \in \bn^r$ such that
$$ \Tor_{k}^{R}(L, M/{\bf I}^{\un}N) \cong (U+{\bf I}^{\un - \uc}V)/{\bf I}^{\un - \uc} W \; \text{ for all } \un \ge \uc$$
and for some graded submodules $U$, $V$ and $W$ of a finitely generated graded $R$-module $T$ satisfying $W \subseteq V$ and $U\cong \Tor_k^R(L,M)$.
\end{lemma}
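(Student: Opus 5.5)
The plan is to mirror the proof of Lemma~\ref{lem:ext:interpret} (following \cite[Lem.~3.4]{GP25} for one ideal), now in the multigraded setting with the multigraded Artin--Rees lemma.

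\textbf{Step 1: a resolution of $L$.} Take a graded free resolution $F_\bullet \to L \to 0$ by finitely generated graded free $R$-modules. Then $\Tor_k^R(L,X) = H_k(F_\bullet\otimes_R X)$. Apply this to the short exact sequence $0\to N/{\bf I}^{\un}N \to M/{\bf I}^{\un}N \to M/N\to 0$. A more direct route avoids this sequence. Set $A := F_k\otimes M$, $B := F_{k-1}\otimes M$, $C:= F_{k+1}\otimes M$, all finitely generated graded, with differentials $\partial_{k+1}: C\to A$ and $\partial_k : A\to B$. Also let $A_N := F_k\otimes N$, viewed inside $A$ as $F_k N$ since $F_k$ is free, and similarly $B_N:=F_{k-1}N$. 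Then
\[
F_\bullet\otimes M/{\bf I}^{\un}N \cong F_\bullet M/{\bf I}^{\un}F_\bullet N .
\]
Hence
\[
\Tor_k^R(L,M/{\bf I}^{\un}N)\cong \frac{\partial_k^{-1}({\bf I}^{\un}B_N)}{\partial_{k+1}(C)+{\bf I}^{\un}A_N}.
\]

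\textbf{Step 2: Artin--Rees.} The submodule $\partial_k(A)\subseteq B$ is finitely generated. By the multigraded Artin--Rees lemma \cite[Thm.~17.1.6]{SH06}, applied to $\partial_k(A)\cap {\bf I}^{\un}B_N$, there is $\uc$ with
\[
\partial_k(A)\cap {\bf I}^{\un}B_N={\bf I}^{\un-\uc}\bigl(\partial_k(A)\cap {\bf I}^{\uc}B_N\bigr)\quad\text{for } \un\ge\uc .
\]
Choose a finitely generated graded submodule $V'\subseteq A$ with $\partial_k(V')=\partial_k(A)\cap{\bf I}^{\uc}B_N$. Then
\[
\partial_k^{-1}({\bf I}^{\un}B_N)=Z_k+{\bf I}^{\un-\uc}V'\quad\text{for }\un\ge\uc,
\]
where $Z_k=\ker\partial_k$.

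\textbf{Step 3: rewriting the quotient.} Put $T:=A/\partial_{k+1}(C)$, which is finitely generated and graded. Define $U:=Z_k/\partial_{k+1}(C)\cong\Tor_k^R(L,M)$ and $V:=(V'+A_N+\partial_{k+1}(C))/\partial_{k+1}(C)$. For the numerator we have ${\bf I}^{\un}A_N\subseteq\partial_k^{-1}({\bf I}^{\un}B_N)$. Enlarging $\uc$ and replacing $V'$ by $V'+{\bf I}^{\uc}A_N$ only alters the numerator by a submodule already contained in it, so I take $V := \bigl(V'+{\bf I}^{\uc}A_N+\partial_{k+1}(C)\bigr)/\partial_{k+1}(C)$. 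For the denominator, the image of ${\bf I}^{\un}A_N={\bf I}^{\un-\uc}({\bf I}^{\uc}A_N)$ is ${\bf I}^{\un-\uc}W$ with $W:=({\bf I}^{\uc}A_N+\partial_{k+1}(C))/\partial_{k+1}(C)\subseteq V$. This gives
\[
\Tor_k^R(L,M/{\bf I}^{\un}N)\cong (U+{\bf I}^{\un-\uc}V)/{\bf I}^{\un-\uc}W\quad\text{for }\un\ge\uc .
\]

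The main obstacle is the bookkeeping in Step 2. One has to ensure that a single $\uc$ works simultaneously for the Artin--Rees identity and for absorbing ${\bf I}^{\un}A_N$ into the form ${\bf I}^{\un-\uc}(\cdot)$. It also has to be checked that the multigraded Artin--Rees lemma applies to products of powers of several ideals. I would use the Rees algebra $\bigoplus_{\un} {\bf I}^{\un}t^{\un}$, which is Noetherian and $\bn^r$-graded, to obtain this uniformly.
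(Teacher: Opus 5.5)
Your argument is correct and follows essentially the route the paper takes. The paper defers to \cite[Lem.~3.4]{GP25} together with the multigraded Artin--Rees lemma, and you do the same: resolve $L$ by finite graded free modules, identify $\Tor_k^R(L,M/{\bf I}^{\un}N)$ with $\partial_k^{-1}({\bf I}^{\un}B_N)/(\partial_{k+1}(C)+{\bf I}^{\un}A_N)$, and apply Artin--Rees to $\partial_k(A)\cap{\bf I}^{\un}B_N$.

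Two cleanups. First, discard your tentative $V$ built from $A_N$, since ${\bf I}^{\un-\uc}A_N$ need not lie in $\partial_k^{-1}({\bf I}^{\un}B_N)$; keep the one built from ${\bf I}^{\uc}A_N$. Second, no enlargement of $\uc$ is needed, because ${\bf I}^{\un}A_N\subseteq Z_k+{\bf I}^{\un-\uc}V'$ already holds for every $\un\ge\uc$.
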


In view of Lemmas~\ref{lem:ext:interpret} and \ref{lem:tor:interpret}, to prove Theorem~\ref{th:main}, it is sufficient to investigate the asymptotic behaviour of $\reg\big((U+{\bf I}^{\un}V)/{\bf I}^{\un}W\big)$ as a function of $\un$, where $U$, $V$ and $W$ are graded submodules of a finitely generated $\bz$-graded $R$-module $T$ satisfying $W \subseteq V$.

\section{Asymptotic regularity of $\Ext^k_R(L,{\bf I}^{\un}M/{\bf I}^{\un}N)$ and $\Tor_k^R(L,{\bf I}^{\un}M/{\bf I}^{\un}N)$}\label{sec3}

In this section, we investigate the asymptotic behaviour of regularity of the modules $\Ext_R^k(L,{\bf I}^{\un}M/{\bf I}^{\un}N)$ and $\Tor_k^R(L,{\bf I}^{\un}M/{\bf I}^{\un}N)$ as functions of $\un$ for every fixed $k\ge 0$. Throughout this section, we assume Setup~\ref{setup1}.
    
\begin{para}\label{para:stability}
    Let $\mathcal{R}({\bf I}) := \bigoplus_{\un\in \bn^r} {\bf I}^{\un}$ be the multi-Rees algebra of $I_1, \dots, I_r$. Then the multi-Rees module $\mathcal{R}({\bf I},M) := \bigoplus_{\un \in \bn^r} {\bf I}^{\un} M$ is a finitely generated $\bn^r$-graded $\mathcal{R}({\bf I})$-module. Clearly, $\mathcal{R}({\bf I},N) = \bigoplus_{\un \in \bn^r}{\bf I}^{\un}N$ is a $\bn^r$-graded $\mathcal{R}({\bf I})$-submodule of $\mathcal{R}({\bf I},M)$. So the quotient $\frac{\mathcal{R}({\bf I},M)}{\mathcal{R}({\bf I},N)} = \bigoplus_{\un \in \bn^r} \frac{{\bf I}^{n}M}{{\bf I}^{n}N}$ is a finitely generated $\bn^r$-graded $\mathcal{R}({\bf I})$-module.
\end{para}

\begin{para}[{\bf $\bz^r$-graded structure on Ext and Tor}]
     Let $\mathcal{H}:=\bigoplus_{\un \in \bz^r} H_{\un}$ be a finitely generated $\bz^r$-graded module over the $\bn^r$-graded ring $\mathcal{R}({\bf I})$. Let 
     $$\mathbb{F}: \cdots \to F_n \to F_{n-1} \to \cdots \to F_1 \to F_0 \to 0$$ 
     be a free resolution of $L$ over $R$ consisting of finitely generated free $R$-modules. Then for each $k \ge 0$, the module $\Ext^k_R(L,\mathcal{H})$ is a subquotient of $\Hom_R(F_k, \mathcal{H})$. Hence $\Ext^k_R(L,\mathcal{H})$ is a finitely generated $\mathcal{R}({\bf I})$-module. Now, for $a \in {\bf I}^{\ur}$, the map $ H_{\un} \xlra{a} H_{\un+\ur}$ induces a map $\Hom_R(\mathbb{F}, H_{\un}) \xlra{a} \Hom_R(\mathbb{F}, H_{\un + \ur})$. It induces a map on the cohomology level $\Ext_R^k(L,H_{\un}) \xlra{a} \Ext_R^k(L,H_{\un+ \ur})$. Thus, $\bigoplus_{\un \in \bz^r}{\Ext_R^k(L,H_{\un})}$ is a $\bz^r$-graded module over the $\bn^r$-graded ring $\mathcal{R}({\bf I})$. Consequently,
    \begin{align*}
    \Ext_R^k \big(L, \mathcal{H} \big) = \bigoplus_{\un \in \bz^r}{\Ext_R^k(L, H_{\un})}.
    \end{align*}
    is a finitely generated $\bz^r$-graded $\mathcal{R}({\bf I})$-module. Similarly, $\bigoplus_{\un \in \bz^r} \Tor_k^R(L,H_{\un})$ is a finitely generated $\bz^r$-graded $\mathcal{R}({\bf I})$-module.
    
\end{para}

\begin{para}[{\bf $\bz \times \bz^r$-graded structure on Ext and Tor}]\label{para:multigrading}
    The multi-Rees ring $\mathcal{R}({\bf I}) = \bigoplus_{\un \in \bn^r} {\bf I}^{\un}$ can be considered as an $\bn \times\bn^r$-graded ring, where the $(l,\un)$ graded component of $\mathcal{R}({\bf I})$ is the $l$th graded component of ${\bf I}^{\un}$ for $(l,\un)\in \bn \times \bn^r$. Thus 
    \begin{align*}
    \mathcal{R}({\bf I}) 
    & = R[f_{i,j} : 1 \le i \le r, \; 1\le j \le g_i] \\
    & = R_{0}[x_1,\dots,x_d,f_{i,j} : 1 \le i \le r, \; 1\le j \le g_i],
    \end{align*}
    where $\deg(x_t)=(1, \underline{0})$ for $1 \le t \le d$, and $\deg(f_{i,j}) = (d_{i,j}, e_i)$ for $1 \le i \le r$, $1 \le j \le g_i$. Set ${\bf I}^{\un}M := 0$ for all $\un = (n_1, \dots, n_r) \in \bz^r$ with $n_i < 0$ for some $i$. Then the multi-Rees module $\mathcal{R}({\bf I},M)$ becomes a $\bz \times\bz^r$-graded $\mathcal{R}({\bf I})$-module, where $\mathcal{R}({\bf I},M)_{(l,\un)} = ({\bf I}^{\un}M)_l$ for every $(l,\un)\in \bz \times \bz^r$. Hence, setting 
    \begin{center}
    $\Ext_{R}^{k}\big(L, \mathcal{R}({\bf I},M)/ \mathcal{R}({\bf I},N)\big)_{(l,\un)} := \Ext_R^{k} \big(L, {\bf I}^{\un} M/ {\bf I}^{\un}N\big)_{l}$
    \end{center}
    for all $(l,\un) \in \bz \times \bz^r$, the module $\Ext_{R}^{k}\big(L, \mathcal{R} ({\bf I},M)/\mathcal{R}({\bf I},N)\big)$ becomes a $\bz \times \bz^r$-graded $\mathcal{R}({\bf I})$-module as well. Since we are only changing the grading from $\bz^r$ to $\bz\times \bz^r$, this module still remains finitely generated. Thus both 
    $$\Ext_{R}^{k}\big(L, \mathcal{R}({\bf I},M)/ \mathcal{R}({\bf I},N)\big) \text{ and } \Tor_{k}^{R}\big(L, \mathcal{R}({\bf I},M)/\mathcal{R}({\bf I},N)\big)$$ 
    are finitely generated $\bz \times \bz^r$-graded $\mathcal{R}({\bf I})$-modules.
\end{para}

In the following result, we show that for each fixed $k\ge 0$, the regularity of $\Ext_R^{k}(L, {{\bf I}^{\un}M}/{{\bf I}^{\un}N})$ is eventually supremum of finitely many  linear functions of $\un$.
    
\begin{proposition}\label{prop:v-Ext-lin}
With \textnormal{Setup~\ref{setup1}}, fix $k \in \bn$. Then, either $\Ext_R^{k} (L, {\bf I}^{\un}M/{\bf I}^{\un}N) = 0$ for all $\un \gg0$, or $\reg\big(\Ext_R^{k} (L, {\bf I}^{\un}M/{\bf I}^{\un}N) \big)$ is eventually supremum of finitely many linear functions of $\un$ with coefficient of $n_i$ in $\{ d_{i,1}, d_{i,2}, \dots,d_{i,g_{i}}\}$  for $1 \le i \le r$.
\end{proposition}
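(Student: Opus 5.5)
The plan is to realize the family $\{\Ext_R^k(L,{\bf I}^{\un}M/{\bf I}^{\un}N)\}_{\un}$ as the graded components of a single finitely generated multigraded module and then apply Theorem~\ref{th:reggraded}. By \ref{para:stability}, $\mathcal{H} := \mathcal{R}({\bf I},M)/\mathcal{R}({\bf I},N)$ is a finitely generated $\bz\times\bz^r$-graded module over $\mathcal{R}({\bf I})$. By \ref{para:multigrading}, the module $\mathscr{M} := \Ext_R^k(L,\mathcal{H})$ is also a finitely generated $\bz\times\bz^r$-graded $\mathcal{R}({\bf I})$-module, with
$$\mathscr{M}_{(\star,\un)} = \bigoplus_{l\in\bz}\Ext_R^k(L,{\bf I}^{\un}M/{\bf I}^{\un}N)_l = \Ext_R^k(L,{\bf I}^{\un}M/{\bf I}^{\un}N)$$
as graded $R$-modules for every $\un \in \bn^r$.

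Next we put this in the form required by Theorem~\ref{th:reggraded}. Take the $\bn\times\bn^r$-graded ring
$$S := R_0[x_1,\dots,x_d,z_{i,j} : 1\le i\le r,\ 1\le j\le g_i],$$
with $\deg x_t = (1,\underline{0})$ and $\deg z_{i,j} = (d_{i,j},e_i)$, and take $S=\mathcal{R}({\bf I})$ with $z_{i,j}=f_{i,j}$. This presentation is exactly the one in \ref{para:multigrading}. One point needs checking: the $R$-module structure on $\mathscr{M}_{(\star,\un)}$ induced from $S$ must agree with the usual one on $\Ext_R^k(L,{\bf I}^{\un}M/{\bf I}^{\un}N)$. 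This holds because the $R$-action commutes with $\Hom_R(\mathbb{F},-)$, so it passes to Ext. The internal grading also agrees, since $\Hom_R(F_k,-)$ with graded free $F_k$ preserves degrees.

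Theorem~\ref{th:reggraded} now gives the dichotomy. Either $\mathscr{M}_{(\star,\un)}=0$ for all $\un\gg0$, or $\reg_R(\mathscr{M}_{(\star,\un)})$ is eventually $\sup_s \mathscr{L}_s(\un)$. Here each $\mathscr{L}_s$ is linear and the coefficient of $n_i$ lies in $\{d_{i,1},\dots,d_{i,g_i}\}$, which is the assertion.

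The main obstacle is the bookkeeping of finite generation and gradings, which is already handled by the paragraphs above. The Tor case needed later is identical, since Tor is computed as a subquotient of $F_k\otimes_R\mathcal{H}$.
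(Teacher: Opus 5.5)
Your proposal is correct and is essentially the paper's proof: take $\mscrm := \Ext_R^k\big(L,\mathcal{R}({\bf I},M)/\mathcal{R}({\bf I},N)\big)$ with the $\bz\times\bz^r$-grading of \ref{para:multigrading} over $S=\mathcal{R}({\bf I})$, identify $\mscrm_{(\star,\un)}$ with $\Ext_R^k(L,{\bf I}^{\un}M/{\bf I}^{\un}N)$, and apply Theorem~\ref{th:reggraded}. Your extra check that the induced $R$-module structure and internal grading agree is a harmless detail the paper leaves implicit (note the $\bz\times\bz^r$-grading comes from \ref{para:multigrading}, not \ref{para:stability}).
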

    
\begin{proof}
    In view of \ref{para:multigrading}, $\mscrm := \Ext_{R}^{k}\big(L, \mathcal{R}({\bf I},M)/\mathcal{R}({\bf I},N)\big)$ is a finitely generated $\bz \times \bz^r$-graded module over the $\bn \times \bn^r$-graded ring $S := \mathcal{R}({\bf I})$, where the multigrading of $S$ is shown in \ref{para:multigrading}. Note that $ \mscrm_{(\star,\un)} := \bigoplus_{l \in \bz}  \mscrm_{(l, \un)}$ is same as $\Ext_R^{k}(L, {\bf I}^{\un}M/{\bf I}^{\un}N)$ for each $\un \in \bz^r$. Hence, the proposition follows from Theorem~\ref{th:reggraded}.
\end{proof}

In a similar way, setting $\mscrm := \Tor_{k}^{R}\big(L, \mathcal{R}({\bf I},M)/\mathcal{R}({\bf I},N)\big)$, one obtains the counterpart of Proposition~\ref{prop:v-Ext-lin} for Tor modules.
    
\begin{proposition}\label{prop:v-Tor-lin}
     With \textnormal{Setup~\ref{setup1}}, fix $k \in \bn$. Then, either $\Tor_k^{R} (L, {\bf I}^{\un}M/{\bf I}^{\un}N) = 0$ for all $\un \gg 0$, or $\reg\big(\Tor_k^{R} (L, {\bf I}^{\un}M/{\bf I}^{\un}N) \big)$ is eventually supremum of finitely many linear functions of $\un$ with coefficient of $n_i$ in $\{ d_{i,1}, d_{i,2}, \dots,d_{i,g_{i}}\}$  for $1 \le i \le r$.
\end{proposition}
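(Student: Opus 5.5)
The plan mirrors the proof of Proposition~\ref{prop:v-Ext-lin}, with Ext replaced by Tor. Set
$$\mscrm := \Tor_{k}^{R}\big(L, \mathcal{R}({\bf I},M)/\mathcal{R}({\bf I},N)\big) = \bigoplus_{\un \in \bz^r} \Tor_k^R\big(L, {\bf I}^{\un}M/{\bf I}^{\un}N\big).$$
By \ref{para:stability}, the quotient $\mathcal{R}({\bf I},M)/\mathcal{R}({\bf I},N)$ is a finitely generated $\bz\times\bz^r$-graded module over $S:=\mathcal{R}({\bf I})$. Here $S$ carries the $\bn\times\bn^r$-grading of \ref{para:multigrading}, with $\deg(x_t)=(1,\underline{0})$ and $\deg(f_{i,j})=(d_{i,j},e_i)$.

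The first step is to check that $\mscrm$ is a finitely generated $\bz\times\bz^r$-graded $S$-module. Take a graded free resolution $\mathbb{F}$ of $L$ over $R$ by finitely generated free modules. Then $\mathbb{F}\otimes_R \big(\mathcal{R}({\bf I},M)/\mathcal{R}({\bf I},N)\big)$ is a complex of finitely generated $S$-modules: each term is a finite direct sum of shifted copies of the quotient. Its differentials are $S$-linear, because the differentials of $\mathbb{F}$ are $R$-linear and multiplication by an element $a\in{\bf I}^{\ur}$ acts only on the second factor. Hence $\mscrm$ is a subquotient of a finitely generated $S$-module, and $S$ is Noetherian, so $\mscrm$ is finitely generated. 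The graded pieces decompose as
$$\mscrm_{(l,\un)}=\Tor_k^R\big(L,{\bf I}^{\un}M/{\bf I}^{\un}N\big)_l,$$
because tensor products and homology commute with the direct sum decomposition over $\un$. The internal grading $l$ is respected since the maps of $\mathbb{F}$ are homogeneous of degree $0$. The multiplication maps carry $\mscrm_{(l,\un)}$ into $\mscrm_{(l+\deg a,\,\un+\ur)}$, which is exactly compatibility with the $\bn\times\bn^r$-grading of $S$.

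Next we identify $\mscrm_{(\star,\un)}$ with $\Tor_k^R(L,{\bf I}^{\un}M/{\bf I}^{\un}N)$ as graded $R$-modules, for every $\un\in\bz^r$. We then apply Theorem~\ref{th:reggraded}, with $S=R_0[x_1,\dots,x_d,f_{i,j}]$ playing the role of $R_0[x_1,\dots,x_d,z_{i,j}]$. It gives the dichotomy directly: either $\mscrm_{(\star,\un)}=0$ for all $\un\gg0$, or the regularity is eventually the supremum of finitely many linear functions of $\un$, with the coefficient of $n_i$ in $\{d_{i,1},\dots,d_{i,g_i}\}$.

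The only point requiring care is the first step, namely that the $\bz\times\bz^r$-grading on Tor is well defined and $S$-linear. It is routine, but it must be checked that the bigraded structure on $\mathbb{F}\otimes_R(-)$ is compatible. Theorem~\ref{th:reggraded} needs a module over a ring of that exact shape, and $S$ is a quotient of such a ring, so the theorem applies. No other obstacle is expected.
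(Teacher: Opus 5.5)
Your proposal is correct and is essentially the paper's proof. The paper likewise takes $\mscrm := \Tor_k^R\big(L, \mathcal{R}({\bf I},M)/\mathcal{R}({\bf I},N)\big)$ with the $\bz\times\bz^r$-grading of \ref{para:multigrading}, identifies $\mscrm_{(\star,\un)}$ with $\Tor_k^R(L,{\bf I}^{\un}M/{\bf I}^{\un}N)$, and applies Theorem~\ref{th:reggraded}, exactly as in Proposition~\ref{prop:v-Ext-lin}; your explicit check of finite generation and $S$-linearity via $\mathbb{F}\otimes_R(-)$ just fills in details the paper leaves implicit (the $\bz\times\bz^r$-grading you cite comes from \ref{para:multigrading} rather than \ref{para:stability}).
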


\section{Asymptotic regularity of $(U + {\bf I}^{\un}V)/{\bf I}^{\un}W$, and its consequences}\label{sec4}
In this section, we establish the main results. We first investigate the asymptotic behaviour of $\reg(M/{\bf I}^{\un}N)$, which is needed in the sequel. Throughout this section, we assume Setup~\ref{setup1}.

\begin{proposition}\label{prop:regquotient}
With \textnormal{Setup~\ref{setup1}}, let $M \neq 0$, and $\indeg(I_i) >0$ for some $i$. Then, exactly one of the following holds:
\begin{enumerate}[\rm(1)]
\item For all $\un \gg 0$, we have $M/{\bf I}^{\un}N = 0$.
\item For all $\un \gg 0$, we have ${\bf I}^{\un}N = 0$, and hence $\reg(M/{\bf I}^{\un}N) = \reg(M)$.
\item For all $\un \gg 0$, we have $M \supsetneqq {\bf I}^{\un} N \supsetneqq 0$. In this case,
$$ \reg(M/{\bf I}^{\un}N) = \reg({\bf I}^{\un} N) - 1,$$
which is eventually the supremum of finitely many linear functions of $\un$ with coefficient of $n_i$ in $\{ d_{i,1}, d_{i,2}, \dots,d_{i,g_{i}}\}$  for $1 \le i \le r$. 
\end{enumerate}
\end{proposition}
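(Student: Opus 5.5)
I will split into cases using Lemma~\ref{lem:stab} twice, applied to two finitely generated $\bz\times\bz^r$-graded modules over the multi-Rees algebra $S=\mathcal{R}({\bf I})$ with the bigrading of \ref{para:multigrading}. First I apply it to $\mathcal{R}({\bf I},N)=\bigoplus_{\un}{\bf I}^{\un}N$. Either ${\bf I}^{\un}N=0$ for all $\un\gg0$, or ${\bf I}^{\un}N\neq 0$ for all $\un\gg 0$. In the first alternative $M/{\bf I}^{\un}N=M$, which is case (2).

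In the second alternative I next apply Lemma~\ref{lem:stab} to the finitely generated $S$-module $\bigoplus_{\un} M/{\bf I}^{\un}N$. Since it is not a priori an $S$-module in the naive way, I will instead use the finitely generated graded $S$-module $M[T]/\mathcal{R}({\bf I},N)$. A cleaner route is a direct argument: $M={\bf I}^{\un}N$ for some $\un\gg0$ forces $M=I_iM$ in positive-degree generators for the index $i$ with $\indeg(I_i)>0$. Indeed, ${\bf I}^{\un}N\subseteq I_i^{n_i}M$ and $n_i\ge1$, so $M=I_iM\subseteq R_+M$ in degrees, whence $M=0$ by graded Nakayama ($M$ is finitely generated and bounded below). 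This contradicts $M\ne0$, so case (1) would actually only occur vacuously. Still, I would keep the trichotomy as stated. The same Nakayama argument shows $M\supsetneqq{\bf I}^{\un}N$ for every $\un$ with $n_i\ge1$, and hence eventually. Thus, when ${\bf I}^{\un}N\neq0$ eventually, we land in case (3) with $M\supsetneqq{\bf I}^{\un}N\supsetneqq0$. Exclusivity of the three cases is clear from the module conditions.

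For the regularity formula in case (3), I use the exact sequence $0\to{\bf I}^{\un}N\to M\to M/{\bf I}^{\un}N\to0$. The strategy is to show that $\reg({\bf I}^{\un}N)$ grows to infinity, while $\reg(M)$ is fixed. Proposition~\ref{prop:v-Ext-lin} with $k=0$, $L=R$, $M:=N$ and $N:=0$ says $\reg({\bf I}^{\un}N)$ is eventually $\sup_s\mathscr{L}_s(\un)$ with coefficients in the generator degrees. It remains to see that this tends to infinity, or at least eventually exceeds $\reg(M)+1$. Since ${\bf I}^{\un}N\ne0$, we have $\reg({\bf I}^{\un}N)\ge\indeg({\bf I}^{\un}N)\ge n_i\,\indeg(I_i)+\indeg(N)$, which tends to $\infty$ along $n_i$. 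To get "for all $\un\gg0$" uniformly, I will argue as follows. The supremum of finitely many linear functions is eventually at least $\reg(M)+2$ on a cone $\un\ge\uc$. For this I choose $\uc$ with $c_i$ large enough that $c_i\indeg(I_i)+\indeg(N)>\reg(M)+1$, using that $\indeg({\bf I}^{\un}N)$ is monotone in $\un$. Then Lemma~\ref{lem:reglem}(1) gives $\reg({\bf I}^{\un}N)\le\max\{\reg(M),\reg(M/{\bf I}^{\un}N)+1\}$, which forces $\reg(M/{\bf I}^{\un}N)\ge\reg({\bf I}^{\un}N)-1$. Lemma~\ref{lem:reglem}(3) gives $\reg(M/{\bf I}^{\un}N)\le\max\{\reg({\bf I}^{\un}N)-1,\reg(M)\}=\reg({\bf I}^{\un}N)-1$. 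Subtracting $1$ preserves the shape of $\sup_s\mathscr{L}_s$.

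The main obstacle is the uniformity in step three. The bound $\reg\ge\indeg$ for nonzero finitely generated modules needs justification: the end of $H^0$ or higher cohomology is at least the initial degree, which follows, e.g., from \eqref{eq:regtorint}, since $\eend\Tor_0^A(\,\cdot\,,R_0)\ge\indeg$. Making the cone choice explicit handles the rest.
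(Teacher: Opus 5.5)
Your proof is correct, but it departs from the paper's in two places.

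First, the dichotomy for $M/{\bf I}^{\un}N$. The paper gets ``$M/{\bf I}^{\un}N=0$ for all $\un\gg0$ or $M/{\bf I}^{\un}N\neq0$ for all $\un\gg0$'' from the asymptotic stability of $\Ass_R(M/{\bf I}^{\un}N)$ \cite[Thm.~1.5.(ii)]{KS96}, and then works in the nonvanishing case. You instead note that ${\bf I}^{\un}N\subseteq I_iM\subseteq R_+M$ as soon as $n_i\ge1$, so graded Nakayama gives $M\supsetneqq{\bf I}^{\un}N$ for every such $\un$. This is more elementary and also sharper. Under the standing hypotheses $M\neq0$ and $\indeg(I_i)>0$, alternative (1) can never occur. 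The statement therefore reduces to the (2)/(3) dichotomy supplied by Lemma~\ref{lem:stab} applied to $\mathcal{R}({\bf I},N)$.

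Second, forcing $\reg({\bf I}^{\un}N)>\reg(M)$ eventually. The paper uses the shape of the functions from Theorem~\ref{th:reggraded}: all coefficients are generator degrees, hence $\ge0$, and $w_{s,i}>0$. You use $\reg({\bf I}^{\un}N)\ge\indeg({\bf I}^{\un}N)\ge n_i\indeg(I_i)+\indeg(N)$, with the first inequality correctly justified via \eqref{eq:regtorint}. Your route gives an explicit threshold for $c_i$ and does not depend on knowing the coefficients of the $\mathscr{L}_s$.

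The rest of your argument matches the paper. This includes the two inequalities from Lemma~\ref{lem:reglem}, and Proposition~\ref{prop:v-Ext-lin} with $L=R$, $k=0$, which amounts to Theorem~\ref{th:reggraded} applied to $\mathcal{R}({\bf I},N)$.

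One incidental remark: the detour you abandoned would not work as written, since $M[T]/\mathcal{R}({\bf I},N)$ is generally not finitely generated over $\mathcal{R}({\bf I})$. You do not rely on it, so nothing is lost.
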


\begin{proof}
In view of \cite[Thm.~1.5.(ii)]{KS96}, $\Ass_R(M/{\bf I}^{\un}N)$ stabilizes for all $\un\gg 0$. It follows that either $M/{\bf I}^{\un}N = 0$ for all $\un \gg 0$ or $M/{\bf I}^{\un}N \neq 0$ for all $\un \gg 0$. Assume that $M/{\bf I}^{\un}N \neq 0$ for all $\un \gg 0$. Note that $\mathcal{R}({\bf I},N)$ is a finitely generated $\bn^r$-graded module over the Noetherian standard $\bn^r$-graded ring $\mathcal{R}({\bf I})$. Therefore by Lemma~\ref{lem:stab}, either ${\bf I}^{\un} N =0$ for all $\un \gg 0$, or $ {\bf I}^{\un} N \neq 0$ for all $\un \gg 0$. If ${\bf I}^{\un} N = 0$ for all  $\un \gg 0$, then
$$\reg(M/{\bf I}^{\un} N) = \reg(M) \; \text{ for all } \un \gg 0.$$ 
Thus, we may assume that ${\bf I}^{\un} N \neq 0$ for all $\un \gg 0$. Consider the short exact sequence
$$ 0 \to {\bf I}^{\un}N \to M \to M/{\bf I}^{\un}N \to 0. $$
By Lemma~\ref{lem:reglem}, we obtain
\begin{equation}\label{eq:regsesquotient}
\begin{aligned}
\reg(M/{\bf I}^{\un}N) &\le \max \{ \reg(M), \reg({\bf I}^{\un}N)-1 \} \quad \text{and} \\
\reg({\bf I}^{\un}N) &\le \max \{ \reg(M), \reg(M/{\bf I}^{\un}N)+1 \}.
\end{aligned}
\end{equation}
Now, in view of \ref{para:multigrading}, by taking $\mscrm = \mathcal{R}({\bf I}, N)$ and $S = \mathcal{R}({\bf I})$, from  Theorem~\ref{th:reggraded}, we obtain
\begin{equation} \label{eq:regineq1}
\reg({\bf I}^{\un}N) =  \sup \{ \mathscr{L}_{s}(\un) : 1 \le s \le m\} \; \mbox{ for all } \; \un \gg 0,
\end{equation}
where each $\mathscr{L}_s : \bn^r \to \bz$ is a linear function defined as $$\mathscr{L}_s(\underline{n}) = (w_{s,1}n_1 + w_{s,2}n_2 + \cdots + {w_{s,r}n_r}) + c_{s}$$ 
for some $w_{s,i}\in \{ d_{i,1},\dots,d_{i,g_{i}}\}$ and $c_{s} \in \bz$ for $1 \le s \le m$ and $1\le i\le r$. Since $\indeg(I_i)>0$ for some $i$, we have $d_{i,j}>0$ for all $1\le j\le g_i$ for this $i$, and hence $w_{s,i}>0$ for all $s$. Consequently, $\reg(M)<\reg({\bf I}^{\un}N)$ for all $\un\gg 0$. Therefore, both the inequalities in \eqref{eq:regsesquotient} yield that 
$$\reg(M/{\bf I}^{\un}N) = \reg({\bf I}^{\un}N) -1 \; \mbox{ for all }  \un \gg 0.$$
Consequently, the result follows from \eqref{eq:regineq1}.
\end{proof}


The following result strengthens \cite[Thm.~3.2]{TW05}.

\begin{proposition}\label{prop:regLM}
    With \textnormal{Setup~\ref{setup1}}, suppose $\indeg(I_i) >0$ for some $i$. Then, exactly one of the following holds:
    \begin{enumerate}[\rm(1)]
    \item For all $\un \gg 0$, we have ${\bf I}^{\un} M \subseteq L$, and hence $\reg(L + {\bf I}^{\un} M) = \reg(L)$.
    \item For all $\un \gg 0$, we have  ${\bf I}^{\un} M \nsubseteq L$. In this case, $ \reg(L + {\bf I}^{\un}M)$ is eventually the supremum of finitely many linear functions of $\un$ with the coefficient of $n_i$ in $\{ d_{i,1}, d_{i,2}, \dots,d_{i,g_{i}}\}$  for $1 \le i \le r$.
    \end{enumerate}
\end{proposition}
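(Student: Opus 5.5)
We reduce to Proposition~\ref{prop:regquotient} and Theorem~\ref{th:reggraded} via the short exact sequence
$$0 \to L \to L+{\bf I}^{\un}M \to (L+{\bf I}^{\un}M)/L \to 0,$$
noting that $(L+{\bf I}^{\un}M)/L \cong {\bf I}^{\un}M/(L\cap {\bf I}^{\un}M)$. Here we regard $L$ and $M$ as graded submodules of a common finitely generated graded module, which is implicit in the statement, since $L+{\bf I}^{\un}M$ must make sense. Set $Q := (L+M)/L$, a finitely generated graded $R$-module. Then $(L+{\bf I}^{\un}M)/L = {\bf I}^{\un}Q$.

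First we establish the dichotomy. The multi-Rees module $\mathcal{R}({\bf I},Q)=\bigoplus_{\un}{\bf I}^{\un}Q$ is finitely generated over $\mathcal{R}({\bf I})$. By Lemma~\ref{lem:stab}, either ${\bf I}^{\un}Q=0$ for all $\un\gg0$ or ${\bf I}^{\un}Q\neq0$ for all $\un\gg0$. These two cases are exactly ${\bf I}^{\un}M\subseteq L$ and ${\bf I}^{\un}M\nsubseteq L$ eventually, so exactly one of (1) and (2) occurs. In case (1), $L+{\bf I}^{\un}M=L$ and there is nothing more to prove.

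In case (2), we view $\mathcal{R}({\bf I},Q)$ as a $\bz\times\bz^r$-graded module, as in \ref{para:multigrading}. Theorem~\ref{th:reggraded} then gives that $\reg({\bf I}^{\un}Q)=\sup_s\mathscr{L}_s(\un)$ for $\un\gg0$, with the required coefficients. Since $\indeg(I_i)>0$ for some $i$, every $\mathscr{L}_s$ has a strictly positive coefficient $w_{s,i}\ge 1$ on $n_i$. Hence $\reg({\bf I}^{\un}Q)\to\infty$, and in particular $\reg({\bf I}^{\un}Q)>\reg(L)+1$ for all $\un\gg0$, since $\reg(L)$ is a fixed finite number or $-\infty$. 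This is the same argument as in the proof of Proposition~\ref{prop:regquotient}.

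It remains to compare $\reg(L+{\bf I}^{\un}M)$ with $\reg({\bf I}^{\un}Q)$ using Lemma~\ref{lem:reglem}. Part (2) gives
$$\reg(L+{\bf I}^{\un}M)\le\max\{\reg(L),\reg({\bf I}^{\un}Q)\}=\reg({\bf I}^{\un}Q).$$
Part (3) gives
$$\reg({\bf I}^{\un}Q)\le\max\{\reg(L)-1,\reg(L+{\bf I}^{\un}M)\}.$$
Since $\reg({\bf I}^{\un}Q)>\reg(L)-1$, this forces $\reg({\bf I}^{\un}Q)\le \reg(L+{\bf I}^{\un}M)$. Combining the two inequalities, $\reg(L+{\bf I}^{\un}M)=\reg({\bf I}^{\un}Q)$ for $\un\gg0$, which is a supremum of linear functions of the required form.

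The only delicate step is ensuring that the eventual growth $\reg({\bf I}^{\un}Q)\to\infty$ holds uniformly on the cone $\un\ge\uc$. Here "for $\un\gg0$" means $\un\ge\uc$, and each $\mathscr{L}_s$ is bounded below on that cone, but not necessarily above $\reg(L)+1$ everywhere. We would handle this by enlarging $\uc$ in the $i$-th coordinate: since each $\mathscr{L}_s$ has nonnegative coefficients on the other $n_j$, the bound $\mathscr{L}_s(\un)\ge \mathscr{L}_s(\uc)+(n_i-c_i)$ holds on the cone. Choosing $c_i$ large therefore makes the supremum exceed $\reg(L)+1$ throughout.
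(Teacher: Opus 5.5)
Your proof is correct. Its skeleton matches the paper's: the short exact sequence $0\to L\to L+{\bf I}^{\un}M\to (L+{\bf I}^{\un}M)/L\to 0$, Lemma~\ref{lem:stab} for the dichotomy, Theorem~\ref{th:reggraded} for the quotient, and a positive slope in the $i$th variable to beat $\reg(L)$.

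The difference is in how you realize $(L+{\bf I}^{\un}M)/L$ as graded components of a finitely generated multigraded module. The paper uses the multigraded Artin--Rees lemma to get $(L+{\bf I}^{\un}M)/L\cong {\bf I}^{\un}M/{\bf I}^{\un-\uc}(L\cap{\bf I}^{\uc}M)$ for $\un\ge\uc$. It then works with the quotient $\mathcal{R}({\bf I},{\bf I}^{\uc}M)/\mathcal{R}({\bf I},L\cap{\bf I}^{\uc}M)$. You instead use the exact equality $(L+{\bf I}^{\un}M)/L={\bf I}^{\un}Q$ with $Q=(L+M)/L$. So the family is just the multi-Rees module $\mathcal{R}({\bf I},Q)$, valid for all $\un\ge\underline{0}$ with no Artin--Rees shift.

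Your route is more elementary and cleaner for this statement. The paper's Artin--Rees route is the one that carries over to assertion (II) in the proof of Theorem~\ref{th:main1}. There the denominator ${\bf I}^{\un}W$ moves with $\un$, and your trick is not available.

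Your explicit enlargement of the cone in the $i$th coordinate also fills in a step the paper only calls ``a similar argument as before.'' That argument uses $w_{s,i}\ge 1$ and $d_{j,l}\ge 0$ for all $j,l$, the latter because $R$ is $\bn$-graded.
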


\begin{proof}
    By the multigraded version of Artin-Rees lemma (cf.~\cite[Thm.~17.1.6]{SH06}), there exists $\uc \in \bn^r$ such that
    $$(L+{\bf I}^{\un}M)/L \cong {\bf I}^{\un}M/(L \cap {\bf I}^{\un}M) \cong {\bf I}^{\un}M/ {\bf I}^{\un-\uc}(L \cap {\bf I}^{\uc}M) \; \text{ for all } \un \ge \uc.$$ 
    Note that
    $$ \bigoplus_{\un \ge \uc } \frac{{\bf I}^{\un}M}{{\bf I}^{\un- \uc}(L \cap {\bf I}^{\uc}M)} = \frac{\mathcal{R}({\bf I}, {\bf I}^{\uc}M)}{\mathcal{R}({\bf I},L \cap {\bf I}^{\uc}M)},$$
    and the module $\mathcal{R}({\bf I},{\bf I}^{\uc}M)/\mathcal{R}({\bf I}, L \cap {\bf I}^{\uc}M)$ is a finitely generated $\bz^r$-graded $\mathcal{R}({\bf I})$-module, where $\big(\mathcal{R}({\bf I}, {\bf I}^{\uc}M)/\mathcal{R}({\bf I}, L \cap {\bf I}^{\uc}M)\big)_{\un} = 0$ for all $\un = (n_1, \dots, n_r) \in \bz^r$ with $n_i < 0$ for some $i$. Therefore, by Lemma~\ref{lem:stab}, either $(L +{\bf I}^{\un}M)/L = 0$ for all $\un \gg 0$ or $(L +{\bf I}^{\un}M)/L \neq 0$ for all $\un \gg 0$. This implies that ${\bf I}^{\un} M \subseteq L$ for all $\un \gg 0$, or ${\bf I}^{\un} M \nsubseteq L$ for all $\un \gg 0$. If ${\bf I}^{\un}M \subseteq L $ for all $\un \gg 0$, then $\reg (L+ {\bf I}^{\un}M) = \reg (L) $ for all $\un \gg 0$. Assume that ${\bf I}^{\un} M \nsubseteq L$ for all $\un \ge \um$, where $\um \ge \uc$. Consider the short exact sequence
    \begin{equation*}
    0 \to L \to (L+{\bf I}^{\un}M) \to (L+{\bf I}^{\un}M)/L \to 0.
    \end{equation*} 
    Thus, by Lemma~\ref{lem:reglem}, we obtain
    \begin{equation}\label{eq:regsessum}
    \begin{aligned}
    & \reg(L+{\bf I}^{\un}M) \le \max \{ \reg(L), \reg((L+{\bf I}^{\un}M)/L) \}, \; \mbox{and} \\
    & \reg((L+{\bf I}^{\un}M)/L) \le \max \{ \reg(L)-1, \reg(L+{\bf I}^{\un}M) \}.
    \end{aligned}
    \end{equation}
    In view of \ref{para:multigrading},  $\mathcal{R}({\bf I},{\bf I}^{\uc}M)/\mathcal{R}({\bf I}, L \cap {\bf I}^{\uc}M)$ is a finitely generated $\bz \times \bz^r$-graded $\mathcal{R}({\bf I})$-module. Therefore, by Theorem~\ref{th:reggraded}, it follows that 
    \begin{equation}\label{eq:regeqLandM}
    \reg\big((L+{\bf I}^{\un}M)/L\big) = \reg\big({\bf I}^{\un}M/{\bf I}^{\un - \uc}(L \cap {\bf I}^{\uc} M)\big)
    \end{equation}
    is eventually the supremum of finitely many linear functions of $\un$ with the coefficient of $n_i$ in $\{ d_{i,1}, d_{i,2}, \dots,d_{i,g_{i}}\}$  for $1 \le i \le r$. Therefore, since $\indeg(I_i) > 0$ for at least one $i$, by a similar argument as before, $ \reg(L) < \reg((L+ {\bf I}^{\un}M)/L)$ for all $\un \gg 0$. 
    Hence, the inequalities in \eqref{eq:regsessum} yield that $ \reg(L+{\bf I}^{\un}M) = \reg((L+{\bf I}^{\un}M)/L)$ for all $\un\gg 0$. The assertion in (2) now follows from \eqref{eq:regeqLandM}.
\end{proof}
 
In the following result, we investigate $\reg\big( (U + {\bf I}^{\un} V)/ {\bf I}^{\un} W\big)$ for all $\un \gg 0$, under certain conditions.
    
\begin{theorem}\label{th:main1}
With \textnormal{Setup~\ref{setup1}}, let $\indeg(I_i) >0$ for some $i$. Let $U$, $V$ and $W$ be graded $R$-submodules of a finitely generated $\bz$-graded $R$-module $T$ such that $W \subseteq V$. Assume that one of the following conditions holds:
\begin{enumerate}[\rm (i)]
    \item $U$ is an $R_+$-torsion module, or
    \item $I^{\uc} V/ I^{\uc} W$ is an $R_+$-torsion module for some $ \uc \in \bn^r$.
\end{enumerate}
Then, either $(U + {\bf I}^{\un}V)/{\bf I}^{\un}W = 0 $ for all $\un \gg 0$, or $(U + {\bf I}^{\un}V)/{\bf I}^{\un}W \neq 0 $ for all $\un \gg 0$. In the second case, exactly one of the following holds:
\begin{enumerate}[\rm (1)]
    \item $\reg\big((U + {\bf I}^{\un}V)/{\bf I}^{\un}W\big) = \reg(U)$ for all $\un \gg 0$.
    \item $\reg\big((U + {\bf I}^{\un}V)/{\bf I}^{\un}W\big)$ is eventually the supremum of finitely many linear functions of $\un$ with the coefficient of $n_i$ in $\{d_{i,1}, d_{i,2}, \dots,d_{i,g_{i}}\}$ for $1 \le i \le r$. 
\end{enumerate}
\end{theorem}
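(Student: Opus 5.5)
The plan is to split $(U+{\bf I}^{\un}V)/{\bf I}^{\un}W$ by a short exact sequence and treat the two pieces separately. Put $X_{\un} := (U+{\bf I}^{\un}V)/{\bf I}^{\un}W$. The natural sequence is
\[
0 \to (U\cap {\bf I}^{\un}V + {\bf I}^{\un}W)/{\bf I}^{\un}W \to \ldots
\]
but it is cleaner to use
\[
0 \to {\bf I}^{\un}V/{\bf I}^{\un}W \to X_{\un} \to (U+{\bf I}^{\un}V)/{\bf I}^{\un}V \to 0 .
\]
The quotient here is $U/(U\cap {\bf I}^{\un}V)$. By the multigraded Artin--Rees lemma (cf.~\cite[Thm.~17.1.6]{SH06}) it equals $U/{\bf I}^{\un-\uc}(U\cap {\bf I}^{\uc}V)$ for $\un\ge\uc$. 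Proposition~\ref{prop:regquotient}, applied with $M=U$ and $N=U\cap{\bf I}^{\uc}V$ and with $\un$ shifted, then gives three possibilities for large $\un$. The quotient is $0$, or it equals $U$ with regularity $\reg(U)$, or its regularity is $\reg({\bf I}^{\un-\uc}N)-1$, which is a sup of linear functions of the required shape. Shifting by $\uc$ only changes the constants $c_s$. The sub ${\bf I}^{\un}V/{\bf I}^{\un}W$ is covered by Proposition~\ref{prop:v-Ext-lin} with $k=0$ and $L=R$, together with Theorem~\ref{th:reggraded}. So it is either eventually $0$ or its regularity is eventually a sup of linear functions $\mathscr{L}_s$ with all $w_{s,i}$ in the allowed sets. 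Every $w_{s,i_0}$ is positive for the index $i_0$ with $\indeg(I_{i_0})>0$, so these functions tend to $+\infty$.

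The eventual vanishing or nonvanishing of $X_{\un}$ follows directly. $\bigoplus_{\un} X_{\un}$, made into a $\bz^r$-graded $\mathcal{R}({\bf I})$-module after shifting by $\uc$, is finitely generated, being an extension of the two finitely generated modules above. Lemma~\ref{lem:stab} then applies. Equivalently, $X_{\un}=0$ exactly when both pieces vanish, and each piece is eventually zero or eventually nonzero.

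Next the regularities are combined under the hypotheses. Under (ii), once $\un\ge\uc$ the module ${\bf I}^{\un}V/{\bf I}^{\un}W$ is a quotient of ${\bf I}^{\un-\uc}\otimes({\bf I}^{\uc}V/{\bf I}^{\uc}W)$, hence $R_+$-torsion. Lemma~\ref{lem:reglem}(4) then gives $\reg X_{\un}=\max\{\reg({\bf I}^{\un}V/{\bf I}^{\un}W),\reg(U/(U\cap{\bf I}^{\un}V))\}$. Under (i), $U$ is torsion, so $U/(U\cap {\bf I}^{\un}V)$ is torsion and the sub $(U+{\bf I}^{\un}W)/{\bf I}^{\un}W\cong U/(U\cap{\bf I}^{\un}W)$ of $X_{\un}$ is torsion as well. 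For (i) I would therefore use the other sequence
\[
0\to (U+{\bf I}^{\un}W)/{\bf I}^{\un}W \to X_{\un}\to (U+{\bf I}^{\un}V)/(U+{\bf I}^{\un}W)\to 0 .
\]
Its left term is torsion and is handled by Proposition~\ref{prop:regquotient}. Its right term is $\bigl({\bf I}^{\un}V/{\bf I}^{\un}W\bigr)/\bigl((U\cap{\bf I}^{\un}V+{\bf I}^{\un}W)/{\bf I}^{\un}W\bigr)$. By Artin--Rees this is again a graded component of a finitely generated $\bz\times\bz^r$-graded $\mathcal{R}({\bf I})$-module, namely $\mathcal{R}({\bf I},{\bf I}^{\uc}V)$ modulo $\mathcal{R}({\bf I},U\cap{\bf I}^{\uc}V)+\mathcal{R}({\bf I},{\bf I}^{\uc}W)$, so Theorem~\ref{th:reggraded} applies to it. Lemma~\ref{lem:reglem}(4) again gives $\reg X_{\un}$ as the maximum of the two regularities.

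In both cases, for $\un\gg0$, $\reg X_{\un}$ is the maximum of two quantities. One is $-\infty$ (the piece vanishes), or $\reg(U)$, or a sup of linear functions with the allowed coefficients. The other is $-\infty$ or such a sup. A sup of the linear forms that actually occur tends to $+\infty$ along $\un\gg0$ and eventually exceeds the constant $\reg(U)$. The argument for this is the same as in the proof of Proposition~\ref{prop:regquotient}: each form has a positive coefficient on $n_{i_0}$, so for $\un\gg0$ it exceeds any fixed constant. Hence if any linear piece is present, outcome (2) holds, since the max of two such sups is again such a sup. Otherwise the only nonvanishing contribution is $\reg(U)$, and outcome (1) holds; nonvanishing of $X_{\un}$ guarantees that some contribution is present. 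The two outcomes are mutually exclusive because a sup of such forms is unbounded while $\reg(U)$ is constant. The main technical step I expect to be the Artin--Rees bookkeeping needed to realize the right-hand quotient in case (i) as a finitely generated multigraded module, with the shifted indices handled so that Theorem~\ref{th:reggraded} applies and produces coefficients in $\{d_{i,j}\}$.
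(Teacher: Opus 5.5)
Your argument is correct, but it is organised differently from the paper's. The paper does not split into cases. With $A_{\un}=(U+{\bf I}^{\un}W)/{\bf I}^{\un}W$, $B_{\un}={\bf I}^{\un}V/{\bf I}^{\un}W$ and $C_{\un}=A_{\un}+B_{\un}$, it uses the sequence $0\to A_{\un}\cap B_{\un}\to A_{\un}\oplus B_{\un}\to C_{\un}\to 0$. Under either (i) or (ii), the intersection lies in whichever of $A_{\un}$, $B_{\un}$ is torsion, so it is an $R_+$-torsion submodule of both $A_{\un}\oplus B_{\un}$ and $C_{\un}$. Two applications of Lemma~\ref{lem:reglem}(4) then give $\reg(C_{\un})=\max\{\reg(A_{\un}),\reg(B_{\un})\}$ uniformly. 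Finally, $A_{\un}$ is handled by Artin--Rees and Proposition~\ref{prop:regquotient}, and $B_{\un}$ by Theorem~\ref{th:reggraded}.

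You instead use, for each hypothesis, a single short exact sequence whose \emph{submodule} is the torsion piece. You rightly note that Lemma~\ref{lem:reglem}(4) needs the torsion term on the left. This avoids the intersection trick, but the pieces now depend on the case. In case (i) it also costs an extra Artin--Rees step: you must present $(U+{\bf I}^{\un}V)/(U+{\bf I}^{\un}W)\cong {\bf I}^{\un}V/(U\cap{\bf I}^{\un}V+{\bf I}^{\un}W)$ as a component of the finitely generated module $\mathcal{R}({\bf I},{\bf I}^{\uc}V)/\bigl(\mathcal{R}({\bf I},U\cap{\bf I}^{\uc}V)+\mathcal{R}({\bf I},{\bf I}^{\uc}W)\bigr)$. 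The paper only needs Theorem~\ref{th:reggraded} for the simpler $B_{\un}$. Your bookkeeping there is sound: the modular law, ${\bf I}^{\un-\uc}{\bf I}^{\uc}={\bf I}^{\un}$, and the fact that the shift only changes the constants. For the zero/nonzero dichotomy, the paper invokes Katz--West stability of $\Ass_R(C_{\un})$. Your second argument is a valid and more self-contained substitute: $X_{\un}=0$ exactly when both pieces vanish, and each piece is eventually zero or eventually nonzero by Lemma~\ref{lem:stab} and Proposition~\ref{prop:regquotient}.

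You should, however, drop your first justification of that dichotomy. The module $\bigoplus_{\un}U/(U\cap{\bf I}^{\un}V)$ is in general \emph{not} finitely generated over $\mathcal{R}({\bf I})$. For instance, take $r=1$, $U=V=R$ and $I\neq R$: the class of $1$ in $R/I^{n}$ never lies in the submodule generated by lower-degree components, since those map into $I/I^{n}$. So $\bigoplus_{\un}X_{\un}$ need not be finitely generated, and Lemma~\ref{lem:stab} cannot be applied to it directly.
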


\begin{proof}
For each $\un \ge \underline{0}$, we set
$$A_{\un} := \frac{U + {\bf I}^{\un}W}{{\bf I}^{\un}W}, \quad B_{\un} := \frac{{\bf I}^{\un}V}{{\bf I}^{\un}W}, \quad \text{and} \quad C_{\un} := \frac{U + {\bf I}^{\un}V}{{\bf I}^{\un}W}.$$
First, we record the following two auxiliary assertions that will be used in the proof. We prove these assertions after establishing the main conclusion of the theorem.
\begin{enumerate}[(I)]
\item \label{item:fact1}
Let $B_{\un} \neq 0$ for all $\un \gg 0$. Then $\reg( B_{\un} )$ is eventually the supremum of finitely many linear functions of $\un$ with the coefficient of $n_i$ in $\{d_{i,1}, d_{i,2}, \dots,d_{i,g_{i}}\}$  for $1 \le i \le r$.
\item \label{item:fact2}
Let $U \neq 0$, and $A_{\un} \neq 0$ for all $\un \gg 0$. Then, either $\reg(A_{\un}) = \reg(U)$ for all $\un \gg 0$, or $\reg(A_{\un})$ is eventually the supremum of finitely many linear functions of $\un$ with the coefficient of $n_i$ in $\{d_{i,1}, d_{i,2}, \dots,d_{i,g_{i}}\}$  for $1 \le i \le r$. 
\end{enumerate}

In view of the proof of \cite[Prop.~3.4]{KW04}, $\Ass_R(C_{\un})$ stabilizes for all $\un \gg 0$. Therefore, either $C_{\un} = 0 $ for all $\un \gg 0$, or $C_{\un} \neq 0 $ for all $\un \gg 0$. We may assume that $C_{\un} \neq 0 $ for all $\un \gg 0$. We need to show that the statement in (1) or the statement in (2) holds. Note that statements (1) and (2) cannot hold simultaneously, since $\indeg(I_i)>0$ for some $i$. If $U = 0$, then 
$ C_{\un} = B_{\un} \neq 0$ for all $\un \ge \underline{0}$.
In this case, by \eqref{item:fact1}, the statement in (2) holds. So, we may assume that $U \neq 0$. By Lemma~\ref{lem:stab}, $B_{\un} = 0$ for all $\un \gg 0$, or $B_{\un} \neq 0$ for all $\un \gg 0$. In the case when $B_{\un} = 0$ for all $\un \gg 0$, we have $ A_{\un} = C_{\un} \neq 0$ for all $\un \gg 0$.
Hence, by \eqref{item:fact2}, either (1) or (2) holds.

We now consider the remaining case, i.e., $U \neq 0$, and $B_{\un} \neq 0$ for all $\un \gg 0$. Note that $A_{\un} + B_{\un} = C_{\un}$. It produces a natural map $A_{\un} \oplus B_{\un} \to C_{\un}$ defined by $(a,b) \mapsto a+b$. This map induces the short exact sequence
\begin{equation}\label{eq:sesUVW}
    0 \to A_{\un} \cap B_{\un} \to A_{\un} \oplus B_{\un} \to C_{\un} \to 0.
\end{equation}
If $U$ is $R_+$-torsion, then for each $\un \ge \underline{0}$, the module 
$$A_{\un} = (U+{\bf I}^{\un}W)/{\bf I}^{\un}W \cong U/(U \cap {\bf I}^{\un}W)$$ 
is also $R_+$-torsion. On the other hand, if ${\bf I}^{\uc}V/{\bf I}^{\uc}W$ is $R_+$-torsion for some $\uc \in \bn^r$, then $B_{\un} = {\bf I}^{\un}V/{\bf I}^{\un}W$ is also $R_+$-torsion for all $\un \ge \uc$. Consequently, in either case, $A_{\un} \cap B_{\un}$ is an $R_+$-torsion submodule of $C_{\un}$ for all $\un \ge \uc$. Therefore, by Lemma~\ref{lem:reglem}.(4), we obtain
\begin{align}
    \reg(C_{\un}) &= \max\big\{ \reg(A_{\un} \cap B_{\un}), \;\reg\big(C_{\un}/(A_{\un} \cap B_{\un})\big) \big\} \nonumber \\
    &\ge \reg(A_{\un} \cap B_{\un}) \; \text{ for all } \un \ge \uc.\label{eq:reglbound}
\end{align} 
Moreover, applying Lemma~\ref{lem:reglem}.(4) to the short exact sequence \eqref{eq:sesUVW}, one has
\begin{equation}\label{reg-A-B-C}
    \reg\left(A_{\un} \oplus B_{\un}\right) = \max \left\{\reg(A_{\un}\cap B_{\un}) , \reg(C_{\un}) \right\} \; \text{ for all } \un \ge \uc.
\end{equation}
Combining \eqref{eq:reglbound} and \eqref{reg-A-B-C}, it follows that
\begin{equation}\label{eq:regeqCn}
    \reg(C_{\un}) = \reg\left(A_{\un} \oplus B_{\un}\right) = \max \left\{ \reg(A_{\un}), \reg (B_{\un}) \right\} \; \text{ for all } \un \ge \uc.
\end{equation}
Note that either $A_{\un}=0$ for all $\un\gg 0$, or $A_{\un}\neq 0$ for all $\un\gg 0$. In the first case, $\reg(A_{\un}) = -\infty$ for all $\un\gg 0$, which implies, by \eqref{eq:regeqCn}, that $\reg(C_{\un}) = \reg (B_{\un})$ for all $\un\gg 0$. By \eqref{item:fact1}, $\reg(B_{\un})$ is eventually the supremum of finitely many linear functions of $\un$ with the coefficient of $n_i$ in $\{d_{i,1}, d_{i,2}, \dots,d_{i,g_{i}}\}$ for $1 \le i \le r$. In the other case, i.e., when $A_{\un}\neq 0$ for all $\un\gg 0$, by \eqref{item:fact2}, either $ \reg(A_{\un}) = \reg(U)$ for all $\un \gg 0$, or $\reg(A_{\un})$ is eventually the supremum of finitely many linear functions of $\un$ with the coefficient of $n_i$ in $\{d_{i,1}, d_{i,2}, \dots,d_{i,g_{i}}\}$. Therefore, in either case, by \eqref{eq:regeqCn}, the statement in (2) holds. Thus, it remains to establish the auxiliary assertions \eqref{item:fact1} and \eqref{item:fact2}.

\textit{Proof of }\eqref{item:fact1}: In view of \ref{para:multigrading}, the module $\mscrm := \bigoplus_{\un \in  \bn^r}{\bf I}^{\un}V/ {\bf I}^{\un}W$ is a finitely generated $\bz \times \bz^r$-graded module over the $\bn \times \bn^r$-graded Rees ring $S := \mathcal{R}({\bf I})$. Hence, the desired conclusion follows from Theorem~\ref{th:reggraded}.

\textit{Proof of }\eqref{item:fact2}: By the multigraded version of Artin-Rees lemma, there exists $\um \in \bn^r$ such that
$$ A_{\un} = \frac{U + {\bf I}^{\un}W}{{\bf I}^{\un}W} \cong \frac{U}{U \cap {\bf I}^{\un}W} \cong \frac{U}{{\bf I}^{\un - \um}(U \cap {\bf I}^{\um}W)} \; \text{ for all } \un \ge \um.$$
Therefore, the conclusion of \eqref{item:fact2} follows immediately from Proposition~\ref{prop:regquotient}.
\end{proof}

\begin{remark}\label{rmk-quot-main-res}
In Theorem~\ref{th:main1}, we observe the following:
\begin{enumerate}[\rm (1)]
    \item 
    The conclusion of Theorem~\ref{th:main1} holds whenever the module $U$ has finite length.
    \item \label{rmk-2}
    Set $C_{\un} := (U + {\bf I}^{\un}V)/{\bf I}^{\un}W$ for $\un\ge \underline{0}$. Then either $C_{\un} = 0 $ for all $\un \gg 0$, or $C_{\un} \neq 0 $ for all $\un \gg 0$. In the second case, if the ideal $I_i$ is generated in a single degree, say $d_i$, for all $1 \le i \le r$, then either $\reg(C_{\un}) = \reg(U)$ for all $\un \gg 0$, or $\reg(C_{\un})$ is a linear function of $\un$, where the coefficient of $n_i$ is $d_i$.
    \item 
    In the case of a single ideal $I$, the conclusion as in Remark~\ref{rmk-quot-main-res}.\eqref{rmk-2} holds even when the ideal $I$ is not generated in a single degree.
\end{enumerate}
    
\end{remark}

Now we are in a position to achieve the main results.

\begin{theorem}\label{th:mainext}
With \textnormal{Setup~\ref{setup1}}, let $\indeg(I_i) >0$ for some $i$. Fix $k \ge 0$. Set $G := \Ext_R^k(L,M)$ and $G_{\un} := \Ext_R^k(L, M/{\bf I}^{\un} N)$ for all $\un \ge \underline{0}$. Assume that at least one of the following two conditions holds.
\begin{enumerate}[\rm (i)]
    \item $G$ is $R_+$-torsion.
    \item $G_{\un}$ is $R_+$-torsion for all $\un \gg 0$.
\end{enumerate}

Then, either $G_{\un} = 0$ for all $\un \gg 0$, or $G_{\un} \neq 0$ for all $\un \gg 0$. In the second case, exactly one of the following holds:
\begin{enumerate}[\rm (1)]
    \item $\reg(G_{\un}) = \reg(G)$ for all $\un \gg 0$.
    \item $\reg(G_{\un})$ is eventually the supremum of finitely many linear functions of $\un$ with coefficient of $n_i$ in $\{d_{i,1}, d_{i,2}, \dots,d_{i,g_{i}}\}$  for $1 \le i \le r$. 
\end{enumerate}
\end{theorem}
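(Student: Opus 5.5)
The plan is to reduce Theorem~\ref{th:mainext} to Theorem~\ref{th:main1} via the interpretation in Lemma~\ref{lem:ext:interpret}. Fix $k\ge 0$. Lemma~\ref{lem:ext:interpret} gives $\uc\in\bn^r$, a finitely generated graded $R$-module $T$, and graded submodules $U,V,W\subseteq T$ with $W\subseteq V$ and $U\cong G=\Ext_R^k(L,M)$, such that
$$G_{\un}\cong (U+{\bf I}^{\un-\uc}V)/{\bf I}^{\un-\uc}W \quad\text{for all } \un\ge\uc.$$
Write $C_{\um}:=(U+{\bf I}^{\um}V)/{\bf I}^{\um}W$ for $\um\ge\underline{0}$. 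Then $G_{\un}\cong C_{\un-\uc}$ for $\un\ge\uc$, and these are isomorphisms of graded modules, so regularity is preserved. A statement holding for all $\um\gg0$ about $C_{\um}$ transfers to one holding for all $\un\gg0$ about $G_{\un}$.

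Next I check the hypotheses of Theorem~\ref{th:main1}.
\begin{enumerate}[(a)]
\item Under (i), $U\cong G$ is $R_+$-torsion, so hypothesis (i) of Theorem~\ref{th:main1} holds directly.
\item Under (ii), $C_{\um}$ is $R_+$-torsion for all $\um\gg0$. Since ${\bf I}^{\um}V/{\bf I}^{\um}W$ is a submodule of $C_{\um}$, and submodules of $R_+$-torsion modules are $R_+$-torsion, ${\bf I}^{\um}V/{\bf I}^{\um}W$ is $R_+$-torsion for some (indeed all large) $\um$. This is hypothesis (ii) of Theorem~\ref{th:main1}.
\end{enumerate}

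Applying Theorem~\ref{th:main1} gives the dichotomy: either $C_{\um}=0$ for all $\um\gg0$ or $C_{\um}\neq0$ for all $\um\gg0$. In the nonzero case, either $\reg(C_{\um})=\reg(U)=\reg(G)$ eventually, or $\reg(C_{\um})$ is eventually $\sup_s \mathscr{L}'_s(\um)$ with the stated coefficients.

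To translate back, substitute $\um=\un-\uc$. We get $\mathscr{L}'_s(\un-\uc)=\sum_i w_{s,i}n_i+\bigl(c'_s-\sum_i w_{s,i}c_i\bigr)$, which is again linear with the same coefficients $w_{s,i}\in\{d_{i,1},\dots,d_{i,g_i}\}$ and only a new integer constant. So alternative (2) of the theorem holds.

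Finally, "exactly one" holds because (1) and (2) cannot both occur. Some $I_i$ has $\indeg(I_i)>0$, so every $w_{s,i}$ for that $i$ is positive. Hence the supremum tends to infinity as $\un$ grows, while $\reg(G)$ is a fixed value. One subtlety: if $G=0$ then $\reg(G)=-\infty$, and (1) is impossible anyway since $G_{\un}\neq0$.

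I expect the only step needing care to be the transfer of hypothesis (ii) through the isomorphism of Lemma~\ref{lem:ext:interpret}, namely that the subquotient description lets torsionness of $G_{\un}$ pass to ${\bf I}^{\um}V/{\bf I}^{\um}W$. This is handled by the submodule observation in (b). Everything else is bookkeeping of the shift by $\uc$.
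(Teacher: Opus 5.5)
Your proposal is correct and follows essentially the same route as the paper. The paper also uses Lemma~\ref{lem:ext:interpret} to write $G_{\un}\cong (U+{\bf I}^{\un-\uc}V)/{\bf I}^{\un-\uc}W$ with $U\cong G$. Under (ii) it passes $R_+$-torsionness to the submodule ${\bf I}^{\un}V/{\bf I}^{\un}W$, and then invokes Theorem~\ref{th:main1}. Your explicit handling of the shift by $\uc$, and of why (1) and (2) are mutually exclusive, only spells out what the paper leaves implicit.
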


\begin{proof}
In view of Lemma~\ref{lem:ext:interpret}, there exists $\uc \in \bn^r$ such that
$$ \Ext_{R}^{k}(L, M/ {\bf I}^{\un}N) \cong (U + {\bf I}^{\un - \uc}V)/ {\bf I}^{\un - \uc}W \; \text{ for all } \un \ge \uc, $$
where $U$, $V$ and $W$ are graded submodules of a finitely generated graded $R$-module $T$ satisfying $W \subseteq V$ and $U\cong \Ext_R^k(L,M)$. Note that $U \cong G$. Moreover, if $G_{\un}$ is $R_+$-torsion for all $\un\gg0$, then $(U+{\bf I}^{\un}V)/{\bf I}^{\un}W$ is $R_+$-torsion for all $\un\gg0$. Consequently, ${\bf I}^{\un}V/{\bf I}^{\un}W$ is also $R_+$-torsion for all $\un\gg0$. Therefore, the desired result follows immediately from Theorem~\ref{th:main1}.
\end{proof}

Similar to the proof of Theorem~\ref{th:mainext}, using Lemma~\ref{lem:tor:interpret} and Theorem~\ref{th:main1}, we obtain the following.

\begin{theorem}\label{th:maintor}
With \textnormal{Setup~\ref{setup1}}, let $\indeg(I_i) >0$ for some $i$. Fix $k \ge 0$. Set $G := \Tor_k^R(L,M)$ and $G_{\un} := \Tor_k^R(L, M/{\bf I}^{\un} N)$ for all $\un \ge \underline{0}$. Suppose any one of the following two conditions holds.
\begin{enumerate}[\rm (i)]
    \item $G$ is $R_+$-torsion.
    \item $G_{\un}$ is $R_+$-torsion for all $\un \gg 0$.
\end{enumerate}
    
Then, either $G_{\un} = 0$ for all $\un \gg 0$, or $G_{\un} \neq 0$ for all $\un \gg 0$. In the second case, exactly one of the following holds:
\begin{enumerate}[\rm (1)]
    \item $\reg(G_{\un}) = \reg(G)$ for all $\un \gg 0$.
    \item $\reg(G_{\un})$ is eventually the supremum of finitely many linear functions of $\un$ with coefficient of $n_i$ in $\{d_{i,1}, d_{i,2}, \dots,d_{i,g_{i}}\}$  for $1 \le i \le r$. 
\end{enumerate}
\end{theorem}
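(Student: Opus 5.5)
The plan is to reduce Theorem~\ref{th:maintor} to Theorem~\ref{th:main1} via Lemma~\ref{lem:tor:interpret}, exactly as in the proof of Theorem~\ref{th:mainext}. Fix $k\ge 0$. By Lemma~\ref{lem:tor:interpret}, there exist $\uc\in\bn^r$ and graded submodules $U$, $V$, $W$ of a finitely generated graded $R$-module $T$, with $W\subseteq V$ and $U\cong \Tor_k^R(L,M)=G$, such that
$$G_{\un}=\Tor_k^R(L,M/{\bf I}^{\un}N)\cong (U+{\bf I}^{\un-\uc}V)/{\bf I}^{\un-\uc}W \quad\text{for all } \un\ge\uc.$$
These isomorphisms are graded, so regularity and vanishing are preserved. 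Reindexing by $\un\mapsto \un-\uc$ does not change the asymptotic statements. A supremum of linear functions $\sum_i w_{s,i}n_i+c_s$ evaluated at $\un-\uc$ is again such a supremum, with the same coefficients $w_{s,i}$ and modified constants $c_s-\sum_i w_{s,i}c_i$.

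Next I verify the hypotheses of Theorem~\ref{th:main1}. Under condition (i), $U\cong G$ is $R_+$-torsion, which is hypothesis (i) there. Under condition (ii), the module $(U+{\bf I}^{\un}V)/{\bf I}^{\un}W$ is $R_+$-torsion for all $\un\gg 0$. It contains ${\bf I}^{\un}V/{\bf I}^{\un}W$ as a submodule, and a submodule of an $R_+$-torsion module is $R_+$-torsion. Hence ${\bf I}^{\uc'}V/{\bf I}^{\uc'}W$ is $R_+$-torsion for some $\uc'$, which is hypothesis (ii). The standing assumption $\indeg(I_i)>0$ for some $i$ is carried over unchanged.

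Theorem~\ref{th:main1} then gives the dichotomy that $G_{\un}$ vanishes for all $\un\gg 0$ or is nonzero for all $\un\gg 0$. In the nonzero case it gives that either $\reg(G_{\un})=\reg(U)=\reg(G)$ eventually, or $\reg(G_{\un})$ is eventually a supremum of finitely many linear functions with the coefficient of $n_i$ in $\{d_{i,1},\dots,d_{i,g_i}\}$.

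For the word ``exactly,'' I rely on Theorem~\ref{th:main1} itself, which already asserts that exactly one of its conclusions holds. The underlying reason is that some coefficient $w_{s,i}$ is positive, so the supremum tends to infinity and cannot equal the constant $\reg(G)$. This is preserved under the shift by $\uc$, so the two alternatives remain mutually exclusive.

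There is no real obstacle here. The substantive input is Lemma~\ref{lem:tor:interpret}, which I take as given; the only points needing care are the reindexing by $\uc$ and the passage from torsion of $G_{\un}$ to torsion of the submodule ${\bf I}^{\un}V/{\bf I}^{\un}W$.
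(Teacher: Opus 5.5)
Your proposal is correct and follows the paper's own argument. The paper proves Theorem~\ref{th:maintor} exactly as it proves Theorem~\ref{th:mainext}: it applies Lemma~\ref{lem:tor:interpret} (with $U\cong G$), uses that torsion of $G_{\un}$ forces the submodule ${\bf I}^{\un}V/{\bf I}^{\un}W$ to be torsion, and then invokes Theorem~\ref{th:main1}. Your remarks on reindexing by $\uc$ and on why the two alternatives are mutually exclusive simply make explicit what the paper leaves implicit.
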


\begin{remark}\label{rmk-v-num}
\begin{enumerate}[(1)]
    \item In Theorem~\ref{th:mainext}, suppose that $G_{\un}\neq 0$ is $R_+$-torsion for all $\un \gg 0$. Then $\depth(R_+, G_{\un}) = 0$ for all $\un \gg 0$. Hence, by \cite[Prop.~2.2]{FG25a}, we have
    $$v(G_{\un})\le\reg(G_{\un}) \; \mbox{ for all } \un \gg 0,$$
    where $v(G_{\un})$ denotes the Vasconcelos invariant (or $v$-number) of $G_{\un}$.
    \item 
    A similar remark as in \ref{rmk-v-num}.(1) can be made for Theorem~\ref{th:maintor} as well.
\end{enumerate}
\end{remark}

\section{Examples}\label{sec:examples}

In this section, we provide some examples to complement the main results. In the first example, for each fixed $k \ge 1$, we have $\Ext^{2k-1}_R(L,M)=0$ and $\Tor_{2k}^R(L,M)=0$, while the functions $$\reg(\Ext_R^{2k-1}(L,M/I^nN)) \quad \text{and} \quad \reg(\Tor_{2k}^R(L,M/I^nN))$$ 
are eventually linear in $n$. Note that $\Ext^{2k}_R(L,M)\neq 0$ for all $k \ge 0$. However, the function $\reg(\Ext_R^{2k}(L,M/I^nN))$ can be asymptotically either constant or linear with a non-zero leading coefficient: it is eventually constant, with value $\reg(\Ext_R^{2k}(L,M))$, for $k\ge 1$, whereas it is eventually linear with a non-zero leading coefficient for $k=0$.

\begin{example}\label{example2}
    Let $R = K[X,Y]/\langle XY\rangle$, where $K$ is a field, and $\deg(X) = \deg(Y) = 1$. Suppose $x$ and $y$ are the residue classes of $X$ and $Y$ in $R$, respectively. Set $L := R/\langle x\rangle$, $M = N = I := \langle y \rangle$. Then, $\Ext_R^k(L,M/I^nN)$ and $\Tor_k^R(L,M/I^nN)$ are $R_+$-torsion for all $k \ge 0$, and $n \ge 1$. Moreover, the following hold.
    \begin{enumerate}[(1)]
    \item When $k \ge 1$ is odd, $\Ext_R^{k}( L, M) =0$. In other cases, we have
    $$\reg(\Ext_R^{k}( L, M)) = 
    \left\{ \begin{array}{ll}
    0 &\text{ if } k =0, \\
    1-k &\text{ if } k \ge 2 \text{ is even.}
    \end{array} \right.$$    
    \item 
    When $k \ge 2$ is even, $\Tor_k^R(L,M) = 0$. In other cases, we have
    $$\reg(\Tor_k^{R}( L, M)) = 
    \left\{ \begin{array}{ll}
    0   &\text{ if } k =0, \\
    1+k &\text{ if } k \ge 1 \text{ is odd.}
    \end{array}\right.$$    
    \item 
    For all $n \ge 1$, $\reg\big(\Ext_R^{k}( L, M/I^nN)\big) = 
    \left \{  
    \begin{array}{ll} 
     n-k& \text{if $k=0$, or if $k\ge 1$ is odd}  \\ 
     1-k& \text{if $k\ge 2$ is even}.
    \end{array}
    \right.$
    \item 
    For all $k\ge 0$ and $n \ge 1$,
    $ 
    \reg\big(\Tor_k^{R}( L, M/I^nN)\big) = 
    \left \{  
    \begin{array}{ll} 
    n+k& \text{if $k$ is even}\\     
    1+k& \text{if $k$ is odd}. 
    \end{array}
    \right.$
    \end{enumerate}
\end{example}

\begin{proof}
Note that 
    \begin{equation}\label{eq:reso} 
    \mathbb{F}_{L}: \quad \cdots \xlra{y} R(-3) \xlra{x} R(-2) \xlra{y} R(-1) \xlra{x} R \lra 0
    \end{equation}
    is the graded minimal free resolution of $L$ over $R$. Since 
    \begin{equation}\label{eq}
    \Hom_{R}(R(-k),W) \cong W(+k) \quad \mbox{and} \quad  R(-k) \otimes_{R}W \cong W(-k)
    \end{equation} 
    for any graded $R$-module $W$ and for all $k \in \bz$, it follows that 
    \begin{align} \Hom_{R}(\mathbb{F}_{L},W): \quad 0\lra W \xlra{x} W(+1) \xlra{y} W(+2) \xlra{x} W(+3) \xlra{y} \cdots \label{Hom-L-reso}\\
     \mbox{and~} \mathbb{F}_{L} \otimes_{R} W: \quad \cdots \xlra{y} W(-3) \xlra{x} W(-2) \xlra{y} W(-1) \xlra{x} W \lra 0.\label{Tensor-L-reso}
     \end{align}
    Putting $W = M$ in \eqref{Hom-L-reso} and \eqref{Tensor-L-reso}, and then computing (co)homologies, we obtain
    \begin{align*}
    \Ext_R^k(L,M) & =
    \left \{
    \begin{array}{ll}
    M & \mbox{if~} k=0,  \\
    0 & \mbox{if $k\ge 1$ is odd},\\
    \big(\langle y \rangle/\langle y^2 \rangle\big)(+k) & \mbox{if $k\ge 2$ is even}
    \end{array}
    \right. \\ 
    \mbox{and} \quad 
    \Tor_k^R(L,M) & =
    \left \{
    \begin{array}{ll}
    M & \mbox{if~} k=0,  \\
    \big(\langle y\rangle/\langle y^2 \rangle\big)(-k) & \mbox{if $k\ge 1$ is odd},\\
    0 & \mbox{if $k\ge 2$ is even}.
    \end{array}
    \right.    
    \end{align*}
    Set $S:=K[X,Y]$. Then $M = \langle y \rangle \cong R/\langle x \rangle \cong S/\langle X \rangle$. It follows that $\reg_R(M) = \reg_S(M) = 0$. Note that $M/I^nN \cong \langle y\rangle/\langle y^{n+1} \rangle$ for all $n \ge 1$. Therefore, by putting $W = \langle y \rangle/\langle y^{n+1} \rangle$ in \eqref{Hom-L-reso} and \eqref{Tensor-L-reso}, and then computing (co)homologies, for all $n \ge 1$, one has that
    \begin{align*}
    \Ext_R^k(L,M/I^nN) & =
    \left \{
    \begin{array}{ll}
    \langle y\rangle/\langle y^{n+1} \rangle & \mbox{if~} k=0,  \\
    \big(\langle y^n\rangle/\langle y^{n+1} \rangle\big)(+k) & \mbox{if $k\ge 1$ is odd},\\
    (\big\langle y\rangle/\langle y^{2} \rangle\big)(+k) & \mbox{if $k\ge 2$ is even}
    \end{array}
    \right. \\ 
    \mbox{and} \quad 
     \Tor_k^R(L, M/I^nN) & =
    \left \{
    \begin{array}{ll}
    \langle y\rangle/\langle y^{n+1} \rangle & \mbox{if~} k=0,  \\
    \big(\langle y\rangle/\langle y^{2} \rangle\big)(-k) & \mbox{if $k\ge 1$ is odd},\\
    \big(\langle y^n\rangle/\langle y^{n+1} \rangle\big)(-k) & \mbox{if $k\ge 2$ is even}.
    \end{array}
    \right.    
    \end{align*}
Clearly, these modules are $R_+$-torsion. Using the decompositions
\begin{align*}
    \langle y\rangle/\langle y^{n+1} \rangle \cong Ky \oplus \cdots \oplus Ky^{n} \quad \text{and} \quad \langle y^n \rangle / \langle y^{n+1} \rangle \cong Ky^n
\end{align*}
one computes $\reg(\langle y \rangle /\langle y^{n+1} \rangle) = n$ and $\reg(\langle y^n \rangle / \langle y^{n+1} \rangle) = n$ for all $n\ge 1$. Since $\reg(W(+k)) = \reg(W) - k$, all the conclusions hold.
\end{proof}

The following example shows that $\reg\big(\Ext^0_R(L, M/{\bf I}^{\un}N)\big)$ is eventually the supremum of finitely many linear functions $\un$, which need not be linear in $\un$.

\begin{example}\label{exam:non-linear}
    Let $R$ and $L$ be as in Example~\ref{example2}. Set $M=N:=R$, $I:= \langle x^2,y \rangle$ and $J := \langle x,y^2\rangle$. Then, $\Ext_R^k(L,M/I^mJ^nN)$ and $\Tor_k^R(L,M/I^mJ^nN)$ are $R_+$-torsion for all $k \ge 0$, and $m,n \ge 1$. Moreover, we have the following:
    \begin{enumerate}[(1)]
    \item 
    For all $m,n \ge 1$,    
    \newline
    $\reg\left(\Ext_R^{k}\left( L, \frac{M}{I^m J^nN}\right)\right) = 
    \left \{  
    \begin{array}{lll} 
     \max\{2m+n-1, m + 2n-1\} & \text{if } k =0, \\
     m +2n -1 -k& \text{if $k\ge 1$ is odd},  \\ 
     2m + n - 1 -k& \text{if $k\ge 2$ is even}.
    \end{array}
    \right.$
    \item 
    For all $k\ge 0$ and $m,n \ge 1$,
    \newline
    $ 
    \reg\left(\Tor_k^{R} \left( L, \frac{M}{I^m J^n N}\right)\right) = 
    \left \{  
    \begin{array}{ll} 
    m + 2n +k -1 & \text{if $k$ is even},\\     
    2m + n + k-1 & \text{if $k$ is odd}. 
    \end{array}
    \right.$
    \end{enumerate}
\end{example}

\begin{proof}
    Let $m,n \ge 1$. Then
    $I^m = \langle x^{2m},y^m \rangle$ and $J^n = \langle x^n,y^{2n} \rangle $. So
    $$M/I^m J^nN = R/I^m J^n = R/\langle x^{2m+n},y^{m +2n} \rangle.$$
    Putting $W = R/I^m J^n$ in \eqref{Hom-L-reso} and \eqref{Tensor-L-reso}, and then computing (co)homologies,
    \begin{align*}
    \Ext_R^k \left(L,\frac{M}{I^m J^nN}\right) = 
    \left \{
    \begin{array}{ll}
    \langle x^{2m+n-1},y\rangle/\langle x^{2m+n},y^{m +2n}\rangle & \mbox{if~} k=0  \\
    \big(\langle x,y^{m+2n-1}\rangle /\langle x,y^{m +2n}\rangle \big)(+k) & \mbox{if~} k\ge 1 \mbox{~is odd} \\
    \big( \langle x^{2m+n-1},y\rangle/\langle x^{2m+n},y\rangle \big)(+k) & \mbox{if~} k\ge 2 \mbox{~is even}
    \end{array}
    \right. \\
    \mbox{and }\;
    \Tor_k^R \left(L, \frac{M}{I^m J^nN} \right) =
    \left \{
    \begin{array}{ll}
    R/\langle x,y^{m +2n}\rangle & \mbox{if~} k=0  \\
    \big( \langle x^{2m+n-1},y\rangle / \langle x^{2m+n},y\rangle \big)(-k) & \mbox{if~} k\ge 1 \mbox{~is odd} \\
    \big(\langle x,y^{m + 2n -1}\rangle/ \langle x,y^{m +2n}\rangle\big)(-k) & \mbox{if~} k\ge 2 \mbox{~is even}
    \end{array}
    \right.
    \end{align*}
    for all $m,n \ge 1$. By similar computations as in Example~\ref{example2}, one obtains (1) and (2).
\end{proof}

We end this article by posing the following natural questions.

\begin{question}
Does the conclusion of Theorem~\ref{th:main1} still hold if we remove the conditions that either $U$ is $R_+$-torsion or ${\bf I}^{\uc}V/{\bf I}^{\uc}W$ is $R_+$-torsion for some $ \uc \in \bn^r$?
\end{question}

\begin{question}
    Does the conclusion of Theorem~\ref{th:main} still hold if we remove the conditions either $G$ is $R_+$-torsion or $G_{\un}$ is $R_+$-torsion for all $ \un \gg 0$?
\end{question}

\bibliographystyle{plain}
\bibliography{ref}

\end{document}